\documentclass[dvipsnames,a4paper]{amsart}

\usepackage[dvipsnames]{xcolor} 
\usepackage{amssymb}
\usepackage{wasysym}
\usepackage{marvosym}
\usepackage{manfnt} 
\usepackage{enumitem}
\usepackage{tikz}
\usepackage{hyperref}
\usepackage{mathtools}
\usepackage{mathscinet}
\usepackage{graphicx}
\usepackage{psfrag}
\usepackage{datetime}
\usepackage{tikz-cd}
\usepackage{url}
\usepackage{adjustbox}
\usepackage[utf8]{inputenc}
\usepackage[margin=25mm]{geometry}

\usepackage{tabstackengine}
\setstackgap{L}{.75\baselineskip}

\usetikzlibrary{decorations.pathmorphing}
\usetikzlibrary{decorations.pathreplacing}
\usetikzlibrary{positioning}
\usetikzlibrary{calc}
\usetikzlibrary{intersections}
\usetikzlibrary{fit}

\makeatletter
\tikzset{
    dot diameter/.store in=\dot@diameter,
    dot diameter=3pt,
    dot spacing/.store in=\dot@spacing,
    dot spacing=10pt,
    dots/.style={
        line width=\dot@diameter,
        line cap=round,
        dash pattern=on 0pt off \dot@spacing
    }
}
\makeatother

\numberwithin{equation}{section}

\newtheorem{theorem}{Theorem}[section]
\newtheorem{lemma}[theorem]{Lemma}
\newtheorem{proposition}[theorem]{Proposition}

\theoremstyle{definition}

\newtheorem{notation}[theorem]{Notation}
\newtheorem{example}[theorem]{Example}
\newtheorem{convention}[theorem]{Convention}

\theoremstyle{remark}
\newtheorem{remark}[theorem]{Remark}

\newcommand{\st}{\mid} 

\newcommand{\nonconsec}[2]{\prescript{\circlearrowleft}{}{\mathbf{I}_{#1}^{#2}}}
\newcommand{\derset}[2]{{\tilde{\mathbf{I}}_{#1}^{#2}}}
\newcommand{\modset}[2]{{\mathbf{I}_{#1}^{#2}}}

\DeclareMathOperator{\Hom}{Hom}
\DeclareMathOperator{\Ext}{Ext}
\DeclareMathOperator{\End}{End}
\DeclareMathOperator{\modules}{mod}
\DeclareMathOperator{\additive}{add}

\DeclareFontFamily{U}{mathx}{\hyphenchar\font45}
\DeclareFontShape{U}{mathx}{m}{n}{
      <5> <6> <7> <8> <9> <10>
      <10.95> <12> <14.4> <17.28> <20.74> <24.88>
      mathx10
      }{}
\DeclareSymbolFont{mathx}{U}{mathx}{m}{n}
\DeclareFontSubstitution{U}{mathx}{m}{n}
\DeclareMathAccent{\widecheck}{0}{mathx}{"71}

\newcommand{\onevec}{\mathbf{1}} 
\newcommand{\bder}{D^{b}(\modules\Lambda)} 
\newcommand{\dla}{\argdl{A_{n}^{d}}} 
\newcommand{\argdl}[1]{\mathcal{D}_{#1}} 
\newcommand{\derclus}{\argclus{\Lambda}} 
\newcommand{\derclusa}{\argclus{A_{n}^{d}}} 
\newcommand{\argclus}[1]{\mathcal{U}_{#1}} 
\newcommand{\U}{U}

\newcommand{\argot}[1]{\otca_{#1}} 
\newcommand{\otclus}{\argot{\Lambda}} 
\newcommand{\otclusa}{\argot{A_{n}^{d}}} 
\newcommand{\otca}{\mathcal{O}} 

\newcommand{\argds}[1]{\mathcal{U}_{#1}^{\{-d, 0\}}} 
\newcommand{\dsubcat}{\argds{\Lambda}} 
\newcommand{\dsubcata}{\argds{A_{n}^{d}}} 

\newcommand{\projinj}{\mathcal{P} \cap \mathcal{I}} 
\newcommand{\quotcat}{\mathcal{B}} 
\newcommand{\shortap}{\mathcal{C}} 

\newcommand{\swr}{\taurus} 

\newcommand{\bbe}{\mathbb{E}}
\newcommand{\fs}{\mathfrak{s}}

\usepackage[backend=biber,
	style=alphabetic,
	url=false,
	doi=false,
	isbn=false,
	maxbibnames=99]{biblatex}
\DeclareFieldFormat{title}{#1} 
\renewbibmacro{in:}{} 
\title[Relations between cluster categorifications in higher type $A$]{Relations between categorifications of \\ higher-dimensional type $A$ cluster combinatorics}

\author{Mikhail Gorsky}
\email{mikhail.gorskii@univ-st-etienne.fr}
\address{Universit\"at Hamburg, Fachbereich Mathematik, Bundesstraße 55, 20146 Hamburg, Germany
\newline 
{\tiny{and}} Institut Camille Jordan UMR 5208, Université Jean Monnet, CNRS, Centrale Lyon, INSA Lyon, Université Claude Bernard Lyon 1, 20, rue Annino, 42023, Saint-Étienne, France}
\author{Nicholas J. Williams}
\email{nw480@cam.ac.uk}
\address{Department of Pure Mathematics and Mathematical Statistics, Centre for Mathematical Sciences, University of Cambridge, Wilberforce Road, Cambridge, CB3 0WB, United Kingdom}

\begin{document}

\begin{abstract}
We consider three categories arising from the higher Auslander algebras of type~$A$ in relation to $d$-dimensional cluster combinatorics: $d$-exact subcategory of the module category of $A^d_{n+1}$ generated by the $d$-cluster-tilting object, the $(d+2)$-angulated cluster category, and the $d$-almost positive subcategory of the derived category (the higher analogue of the category of two-term complexes of projectives).  
We show that the third one, introduced by the second-named author, is the $d$-exangulated quotient of the other two, introduced by Oppermann and Thomas, by the ideals generated by morphisms factoring through morphisms from injective to projective objects, thus providing an algebraic connection between the two models 
of Oppermann-Thomas.
This is a $d$-exangulated version in type $A$ of a result of Brüstle and Yang and its interpretation by the first-named author together with Fang, Palu, Plamondon and Pressland.
It also explains a well-known coincidence between the number of 2-term silting complexes in type $A_{n}$ and of tilting modules in type $A_{n+1}$ from the $0$-Auslander perspective.  
We expect this to serve as a prototypical example of 
the $0$-Auslander correspondence in higher homological algebra.
\end{abstract}

\maketitle

\section{Introduction}

Around 25 years ago, motivated by problems related to dual canonical bases in Lie theory and the concept of total positivity in matrix groups and flag varieties, Fomin and Zelevinsky \cite{FZ} introduced the notion of cluster algebras.
Cluster algebras are constructed somewhat differently to the usual method of defining algebras via generators and relations.
A cluster algebra is always contained in the intersection of infinitely many Laurent polynomial rings, defined by a recursive procedure via so-called mutations; it many natural situations, it coincides with this intersection. In the algebro-geometric interpretation of this, the spectrum of a cluster algebra is covered up to 
codimension 2 by overlapping spectra of Laurent polynomial rings, i.e., by algebraic tori.

Since their introduction, cluster algebras and their geometric incarnations turned out to appear in many
contexts which might look completely unrelated to each other at a quick glance, underscoring their significance. This includes algebraic combinatorics \cite{CFZ}, representation theory of finite-dimensional algebras \cite{AIR, DF}, Calabi-Yau differential graded categories \cite{amiot}, Lie theory \cite{gls-survey}, integrable systems \cite{B, Keller}, Donaldson-Thomas theory in algebraic geometry \cite{nagao}, physics of scattering amplitudes \cite{amplyt}, study of moduli spaces of local systems of surfaces \cite{FG}, low-dimensional contact geometry \cite{CGGLSS, CW}, various aspects of mirror symmetry \cite{GHKK, KeelYu, CGSS, RW, AB}, and many other areas besides. 

The paper concerns with categorification of higher-dimensional cluster combinatorics in the context of higher homological algebra.
Higher homological algebra has been a subject of an intensive research over the last 20 years. It originates from a series of works of Iyama introducing higher Auslander--Reiten theory and higher-dimensional version of Auslander correspondence \cite{iya-mos, iya-Ac, iy-clus}. Since then, it has been connected to such diverse topics as symplectic geometry and Fukaya categories of symmetric products of discs \cite{DJL}, higher Waldhausen construction in algebraic $K$-theory \cite{jasso_k}, study of singularities \cite{HIMO}, and homological minimal model program in noncommuative algebraic geometry \cite{JKM1,JKM2}, where it was used to prove the Donovan-Wemyss  conjecture stating that smooth irreducible 3-fold flops are classified by their contraction algebras \cite{August, DW}.

Categorification of higher-dimensional cluster combinatorics goes back to the seminal paper by Oppermann and Thomas \cite{ot} which presents two different models related to triangulations of cyclic polytopes: the subcategory of a module category generated by a $d$-cluster-tilting object and a $(d+2)$-angulated category.

We relate the two categorical models of \cite{ot} to each other. The relation goes through an additional category and is motivated by two results on categorifications of usual cluster combinatorics, one applying only in type $A$ and the other in a more general context.

One can categorify clusters and their mutations via tilting modules over the path algebra of the corresponding quiver, in linearly oriented type $A$ \cite{BK}, or via cluster-tilting objects in a $2$-Calabi-Yau algebraic triangulated category, in general \cite{BMRRT}. In linearly oriented type $A$, these two frameworks are in fact related via a third one: that of silting complexes in the category of two-term complexes of projectives, up to homotopy \cite{AI}.

All the categories involved can be naturally considered as extriangulated categories \cite{NP1}. In fact, all the categories can be endowed with $0$-Auslander extriangulated structures, in the sense of \cite{GNP2}. These are extriangulated structures of projective dimension at most $1$ and dominant dimension at least $1$ or, equivalently, of injective dimension at most $1$ and codominant dimension at least $1$. In such categories, the notions of silting, tilting, and cluster-tilting agree under reasonable conditions, and there is a common theory of mutations of indecomposable summands. We refer to \cite{GNP2} for all the definitions and precise statements.

With this in mind, the statements in this classical setting are as follows.

\begin{enumerate}
    \item  The category of two-term complexes of projective modules, up to homotopy, over the path algebra of the linearly oriented $A_n$ quiver is the quotient of the module category of the linearly oriented $A_{n+1}$ quiver by the ideal of morphisms factoring through the unique indecomposable projective-injective module $P$.
 We first learnt this from a similar observation of Ringel \cite{ringel_catalan}, thought it is possible that this was independently observed elswhere.
 We explain this fact from the perspective of $0$-Auslander extriangulated categories 
 In fact, every morphism from a projective to an injective object in the latter module category factors through $P$, and so this ideal can be alternatively defined as the ideal generated by morphisms factoring first through an injective and then through a projective object. 

\item The category of two-term complexes of projective modules, up to homotopy, over a Jacobian algebra of a quiver with potential is a quotient of the corresponding cluster category by a certain ideal of morphisms. This was first observed in \cite{BrustleYang}. Their statement was explained and generalised in \cite{FGPPP} from the following perspective. One considers a relative $0$-Auslander extriangulated structure, which first appeared in \cite{PPPP},  on the cluster category. With this structure on the domain of the quotient functor, the ideal of morphisms turns out to be generated by morphisms factoring first through an injective and then through a projective object. The proofs both in \cite{BrustleYang} and \cite{FGPPP} use dg enhancements. The latter interpretation was further incorporated by Chen into a 0-dimensional version of the Auslander correspondence for exact dg categories \cite{chen20230, Chen24}; these are dg enhancements of extriangulated categories \cite{chenI}.
\end{enumerate}

Some ingredients have been generalised to the higher-dimensional context. 

\begin{enumerate}
 \item 
 Replacement of module category of the linearly oriented $A_n$ quiver is given in \cite{ot} by the subcategory of the module category of the higher Auslander algebra of type $A$, $A^d_n$, additively generated by the unique $d$-cluster-tilting object $M^{(d,n)}$.

\item 
The usual cluster category is replaced by a higher-dimensional analogue \cite{ot} which carries a $(d+2)$-angulated structure in the sense of \cite{gko}.

 \item Extriangulated categories have been generalised to $d$-exangulated categories by Herschend-Liu-Nakaoka \cite{hln}. A relevant relative $d$-exangulated structure on the 
cluster category, generalising the one from \cite{PPPP}, first appeared in \cite{JS}, where it was motivated by the study of indices. 
\end{enumerate}

We combine these ingredients together to give higher-dimensional counterparts of the aforementioned results on quotients of categories in the usual cluster setting. To be illustrative,  we deliberately avoid working with general $d$-exangulated categories in the main part of the paper and restrict to (mostly combinatorial) considerations in higher type $A$.

We discuss in detail the $d$-exangulated category serving as an analogue of the category of 2-term complexes of projectives.
It first appeared in \cite{njw-icra}, and was used as a third model for the combinatorics of triangulations of cyclic polytopes in \cite{njw-phd}.
Explicitly, this category is defined as $\additive (M^{(d, n)} \oplus A_{n}^{d}[d])$, where $M^{(d, n)}$ is the basic $d$-cluster-tilting object of $A_{n}^{d}[d]$, which is unique up to isomorphism.
We call it the \emph{$d$-almost-positive category}.
We prove the following two results.

\begin{theorem} (= Theorem~\ref{thm:equiv})
\label{thm:intro_1}
As a $d$-exangulated category, the $d$-almost positive category in type $A^d_n$ is the quotient of the (unique) $d$-cluster tilting subcategory of the module category of $A^d_{n+1}$ by the ideal of morphisms factoring through projective-injective objects.
\end{theorem}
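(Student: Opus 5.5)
The plan is to work entirely combinatorially, using the Oppermann--Thomas description of indecomposables in $\modset{}{}$-type index sets. Indecomposable summands of $M^{(d,n+1)}$ are indexed by increasing $(d+1)$-tuples in $\{1,\dots,n+2d+1\}$ (up to the usual shift conventions) with no two entries consecutive in the appropriate sense. Those of $\additive(M^{(d,n)}\oplus A^d_n[d])$ are indexed by $d$-almost-positive tuples $\derset{}{}$. Morphism spaces in both settings are at most one-dimensional, and nonzero exactly when the tuples interlace, i.e.\ $a_0\le b_0<a_1\le b_1<\dots$ or its analogue. Composition of basis morphisms is the basis morphism or zero, according to an interlacing condition on the three tuples.

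\textbf{Step 1 (objects).} The projective-injective indecomposable of $\additive M^{(d,n+1)}$ is unique, namely $P_1=I_{n+1}$, with a specific tuple $(1,3,\dots,2d+1)$ in suitable normalisation. I would construct an explicit bijection $\Phi$ from the remaining tuples of $A^d_{n+1}$ to the tuples indexing the $d$-almost-positive category. The natural candidate is a shift of all entries, for instance subtracting a fixed vector. Under $\Phi$:
\begin{itemize}
\item projective non-injective modules of $A^d_{n+1}$ go to the shifted projectives $A^d_n[d]$;
\item the injectives go to the injectives of the $d$-almost-positive category;
\item the remaining modules go to $M^{(d,n)}$.
\end{itemize}
This is the higher analogue of the assignment $P_i\mapsto P_{i-1}[1]$.

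\textbf{Step 2 (morphisms, the main obstacle).} I would show that for tuples $a,b$ different from the projective-injective, the basis morphism $a\to b$ factors through $P_1$ exactly when the images $\Phi(a),\Phi(b)$ fail to interlace in the derived sense. Otherwise $\Phi(a)\to\Phi(b)$ is nonzero. This gives a fully faithful functor $\additive M^{(d,n+1)}/[P_1]\to$ the $d$-almost-positive category. The hard part is comparing the two interlacing conditions. The module one involves the boundary entries near $1$, while the derived one allows morphisms from $A^d_n[d]$ that vanish in modules. My first approach is a direct case split according to whether $a_0$ and $b_0$ equal the minimal value, using that $\Hom(P_1,-)$ and $\Hom(-,P_1)$ are supported on tuples with all entries in the initial interval. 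Compatibility with composition then follows from the one-dimensionality of Hom spaces and the combinatorial composition rule.

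\textbf{Step 3 (exangulated structure).} On the module side the $d$-exangulated structure is given by $d$-extensions, $\Ext^d(a,b)$. For $d$-cluster-tilting subcategories, $\Ext^d$ is nonzero, and one-dimensional, iff $b$ and $a$ intertwine in the opposite sense. On the derived side it is $\Hom(a,b[d])$ restricted to the subcategory. I would check via $\Phi$ that these agree, and that $P_1$ contributes nothing since it is projective-injective. Then I would invoke the general fact that quotients of a $d$-exangulated category by an ideal generated by projective-injectives inherit a $d$-exangulated structure, with the same $\mathbb{E}$ and realisations given by images of $d$-exact sequences. So it remains to match distinguished $(d+2)$-angles/conflations. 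Both sides are generated (up to direct sums) by the Oppermann--Thomas $d$-exact sequences between tuples differing by a single ``exchange''. I would verify that $\Phi$ sends those to the corresponding $(d+2)$-term sequences in the derived category, with $P_1$ terms deleted. Deleting projective-injective summands does not change the realised class, which completes the equivalence of $d$-exangulated categories.
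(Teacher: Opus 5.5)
Your overall strategy is the paper's: match indecomposables, Hom-supports and the Oppermann--Thomas $d$-exangles tuple by tuple. However, the concrete input to Steps 1 and 2 is wrong.

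\emph{The projective-injective is not unique for $d\geqslant 2$.} By Theorem~\ref{thm:mod_desc}\eqref{op:mod_desc:proj},\eqref{op:mod_desc:inj}, the projective-injectives of $\additive M^{(d,n+1)}$ are all $M_C$ with $c_0=1$ and $c_d=n+2d+1$. For $d=n=2$ these are $M_{137},M_{147},M_{157}$ in Figure~\ref{fig:A32_ar_quiv}. Killing just one of them leaves $9$ indecomposables, against the $7$ of $\argds{A_2^2}$. The factorisation test in Step 2 must also range over all of them. For instance, $M_{146}\to M_{247}$ has to vanish, since $\Hom_{\shortap}(\U_{146},\U_{247})=0$, yet it factors through $M_{147}$ only. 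What you need is this: $M_B\to M_A$ factors through a projective-injective iff $b_0=1$ and $a_d=n+2d+1$. Use $M_{(1,b_1,\dots,b_{d-1},n+2d+1)}$ and Theorem~\ref{thm:mod_desc}\eqref{op:mod_desc:comp}. This converts $(B-\onevec)\wr A$ into exactly the condition of Theorem~\ref{thm:dcomp_desc}\eqref{op:dcomp_desc:hom}.

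\emph{Your dictionary of objects is wrong, already for $d=1$.} An equivalence of $d$-exangulated categories preserves projectives. In $\shortap$ the projectives are $\additive A_n^d$, while $\additive A_n^d[d]$ are injective, since $\Hom(X,A_n^d[2d])=0$ for $X\in\shortap$. So projective non-injective $A^d_{n+1}$-modules must go to projective $A^d_n$-modules, not to $A^d_n[d]$. The objects $A^d_n[d]$ receive the injective non-projective modules; the $d=1$ analogue is $I\mapsto P[1]$, not $P_i\mapsto P_{i-1}[1]$. For $d=n=1$, your rule sends $M_{13}\mapsto\U_{24}=A^1_1[1]$ and $M_{24}\mapsto\U_{13}=A^1_1$. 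This destroys $\bbe(M_{24},M_{13})\neq 0$, because $\bbe(\U_{13},\U_{24})=\Hom(A^1_1,A^1_1[2])=0$. No shift is needed at all: $\nonconsec{n+2d+1}{d}$ is literally $\modset{n+2d+1}{d}$ with the projective-injective tuples removed, and the functor is $M_A\mapsto \U_A$.

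\emph{Step 3 relies on a fact that fails in general.} As the paper remarks right after stating Theorem~\ref{thm:equiv} (citing \cite[Example~3.3]{hzz}), for $d\geqslant 2$ the images of distinguished $d$-exangles under a quotient by projective-injectives need not form a $d$-exangulated structure. So you may not assume $\quotcat$ is $d$-exangulated. Instead, define $\bbe$ and $\fs$ on $\quotcat$ by images, and show that the additive equivalence $\quotcat\to\shortap$ identifies them with the structure of $\shortap$. That structure is genuinely $d$-exangulated, since $\shortap$ is an extension-closed subcategory of $\derclusa$. Well-definedness on $\quotcat$ is then an output of the theorem, by transport of structure, not an input. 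With the identity-on-tuples bijection this comparison is immediate. First, $\bbe$ is nonzero for the same intertwining pairs on both sides. Second, the middle terms in Theorem~\ref{thm:mod_desc}\eqref{op:mod_desc:seq} (for $A^d_{n+1}$) and Theorem~\ref{thm:dcomp_desc}\eqref{op:dcomp_desc:seq} differ exactly by projective-injective summands, because $\derset{n+2d+1}{d}\cap[n+2d+1]^{d+1}=\nonconsec{n+2d+1}{d}$.
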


\begin{theorem} (=Theorem~\ref{thm:main2})
\label{thm:intro_2}
As a $d$-exangulated category, the $d$-almost positive category in type $A_n^d$ is the quotient of the cluster category, endowed with the relative structure of \cite{JS} induced by $A^d_n$ considered as a cluster-tilting object, 
by the ideal generated by morphisms with injective domain and projective codomain.
\end{theorem}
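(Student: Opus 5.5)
The plan is to prove Theorem~\ref{thm:intro_2} in three stages: identify the objects and morphisms combinatorially, compare the $d$-exangulated structures, and then identify the quotient with the $d$-almost positive category $\dsubcata = \additive(M^{(d,n)} \oplus A_n^d[d])$.

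\textbf{Combinatorial model and objects.} Following \cite{ot}, indecomposables of the $(d+2)$-angulated cluster category $\otclusa$ are indexed by non-consecutive $(d+1)$-subsets of $\{1,\dots,n+2d+1\}$ (cyclically, $\nonconsec{}{}$). Indecomposables of $\dsubcata$ are indexed by $(d+1)$-subsets as well: those of $M^{(d,n)}$ together with the shifted projectives $P_i[d]$. Write $\otclusa$ as the orbit category of $\bder$ under $\mathbb{S}_d = \mathbb{S}\circ[-d]$. Each orbit of an indecomposable in the fundamental domain $\dsubcata \vee A_n^d[d]$ has a unique representative in $\additive(M^{(d,n)}\oplus A^d_n[d])$. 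So the composite $\dsubcata \hookrightarrow \bder \to \otclusa$ is a bijection on indecomposables, and there is a natural additive functor $F\colon \dsubcata \to \otclusa$.

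\textbf{Morphisms.} For indecomposables $X,Y$ of $\dsubcata$,
\[
\Hom_{\otclusa}(FX,FY)=\bigoplus_{i}\Hom_{\bder}(X,\mathbb{S}_d^{i}Y).
\]
I will show that only $i=0$ and $i=1$ contribute. The $i=0$ term is $\Hom_{\dsubcata}(X,Y)$. The $i=1$ term is nonzero only when $X$ is injective and $Y$ is projective for the relative structure of \cite{JS}, i.e. $X\in \additive A_n^d[d]$ after the identification and $Y\in \additive A_n^d$; in that case every such morphism factors through an injective-to-projective morphism. This step uses the explicit nonvanishing criterion of \cite{ot} for morphisms between interlacing subsets, together with the vanishing of $\Hom(M,\mathbb{S}_d^{i}M')$ for $i\ge2$, by the degree bounds from $\operatorname{gldim} A^d_n=d$.

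I then need to check that $F$ is compatible with composition modulo the ideal. The required statement is that the $i=1$ components form exactly the ideal $[\mathcal I\to\mathcal P]$ generated by morphisms from relative injectives to relative projectives. This yields an equivalence of additive categories $\otclusa/[\mathcal I\to \mathcal P]\simeq \dsubcata$. This identification of the ideal is the step I expect to be the main obstacle: one has to show that \emph{every} morphism in the cluster category not coming from degree $0$ factors through such a map, and conversely that such composites never land in degree $0$. I would first try to handle it via the $(d+1)$-subset combinatorics. A morphism $X\to \mathbb{S}_d Y$ nonzero forces the intertwining pattern of the corresponding tuples to pass through a subset indexing an injective-projective pair, so the morphism factors through $X\to I \to P\to Y$ explicitly.

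\textbf{Exangulated structure.} The relative structure on $\otclusa$ from \cite{JS} is given by $\bbe_{A}(X,Y)=[A_n^d](X,Y[d])$, the morphisms factoring through $\additive A_n^d$. The $d$-exangulated structure on $\dsubcata$ is inherited from the $(d+2)$-angulated structure of $\bder$, with $\bbe(X,Y)=\Hom(X,Y[d])$. I will check that under $F$ these bifunctors agree. Concretely, $\Hom_{\dsubcata}(X,Y[d])$ lands in the summand $i=0$, and the factoring condition through $A_n^d$ is automatic there. Conversely, the $i=1$ summands of $\Hom_{\otclusa}(X,Y[d])$ do not factor through $A_n^d$. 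Realisations of $(d+2)$-angles are then sent to $(d+2)$-angles, since $\bder\to\otclusa$ is a $(d+2)$-angulated functor \cite{ot}.

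Finally I invoke the general fact that the ideal quotient of a $d$-exangulated category by morphisms factoring through $\mathcal I\to\mathcal P$ inherits a $d$-exangulated structure with the same $\bbe$, the $d$-exangulated analogue of \cite{FGPPP}. Since $\bbe$ vanishes on such morphisms, this identifies the quotient structure with that of $\dsubcata$.
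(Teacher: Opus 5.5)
Your framework is sound, and in one respect cleaner than the paper's: with the honest functor $F\colon\dsubcata\hookrightarrow\derclusa\to\otclusa$, compatibility with composition is automatic. What remains is to show that $\dsubcata\to\otclusa/[\mathcal I\to\mathcal P]$ is full and faithful. Grade $\Hom_{\otclusa}(FX,FY)=\bigoplus_i\Hom_{\dla}(X,G^iY)$ with $G=\nu_d^{-1}[d]$. (The orbit functor is $\nu_d[-d]=\mathbb{S}\circ[-2d]$, not $\mathbb{S}\circ[-d]$.) For $X,Y\in\dsubcata$ only degrees $0$ and $1$ occur.

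The gap is that the degree-one part is \emph{not} confined to $X\in\additive A^d_n[d]$, $Y\in\additive A^d_n$, so fullness is not automatic. Take $n\geq4$, $A=(2,4,\dots,2d+2)$ and $B=(4,6,\dots,2d+4)$. Then $A\wr(B-\onevec)$, so $\Hom_{\otclusa}(O_B,O_A)\neq0$ by Theorem~\ref{thm:clus_desc}. But $\Hom_{\dsubcata}(\U_B,\U_A)=0$ by Theorem~\ref{thm:dcomp_desc}, since $b_0-1>a_0$. Yet $O_B$ is not a shifted projective and $O_A$ is not projective. For $d=1$ these are just the familiar summands $\Ext^1(X,\tau^{-1}Y)$ between ordinary modules. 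Fullness therefore amounts to factoring every morphism with $a_0<b_0-1<a_1<\dots<a_d<b_d-1$ through a map $\additive A^d_n[d]\to\additive A^d_n$. Your sentence about the intertwining pattern ``passing through'' an injective--projective pair only restates this. This is the actual content of the theorem. The paper supplies it with the explicit factorisation $O_B\to O_{(b_0,\dots,b_{d-1},n+2d+1)}\to O_{(1,b_0,\dots,b_{d-1})}\to O_A$, using Lemma~\ref{lem:clus_comp} to see that the composite is non-zero. You also leave faithfulness unargued, though your grading gives it at once. Maps $A^d_n[d]\to A^d_n$ are homogeneous of degree $1$, and there are no negative-degree maps between objects of $\dsubcata$. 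Hence every element of the ideal has zero degree-$0$ component.

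On the exangulated side, the relative structure is misstated. The structure of \cite{JS} used in the paper takes $\bbe_{\mathcal F}(X,Y)$ to be the morphisms $X\to Y[d]$ factoring through $\additive A^d_n[d]$, not $\additive A^d_n$. With your version the projectives and injectives would be $\additive A^d_n[-d]$ and $\additive A^d_n$, contradicting your own identification. The $\bbe$'s would also not match. Since $\Hom_{\otclusa}(A^d_n,A^d_n[d])=0$, every morphism $X\to P[d]$ factoring through $\additive A^d_n$ is zero when $P$ is projective. Yet $\bbe_{\dsubcata}(\U_B,\U_A)=\Ext^d_{A^d_n}(M_B,M_A)\neq0$ for $A=(1,3,\dots,2d+1)$, $B=(2,4,\dots,2d+2)$ and $n\geq2$. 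Even with the correct structure, ``the factoring condition is automatic'' needs the explicit factorisation $O_B\to O_{(b_0,\dots,b_{d-1},n+2d+1)}\to O_A[d]$ of Proposition~\ref{prop:d_exangles}, and your converse needs its second half.

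Finally, the ``general fact'' you invoke at the end is not available. For $d$-exangulated categories, even the quotient by morphisms factoring through projective-injectives need not be $d$-exangulated with the image $d$-exangles \cite[Example~3.3]{hzz}, as the paper points out. You do not need it. Once your functor is an equivalence matching $\bbe$ and realisations, transport the structure from $\dsubcata$. Well-definedness of the structure on the quotient is then a consequence, exactly as the paper observes.
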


We note that the quotients by morphisms with injective domain and projective codomain as above naturally preserve and reflect rigidity and 
exchange $d$-exangles, just as explained in \cite{FGPPP} in the case $d = 1$.
This explains why different models categorify the same combinatorics: the analogues of two-term silting objects and their mutations in the $d$-almost-positive category match with the tilting modules and their mutations under the quotient in Theorem~\ref{thm:intro_1} and with the maximal rigid objects and their mutations in the cluster category (considered with the relative structure) under the quotient in Theorem~\ref{thm:intro_2}. The latter coincide with the cluster-tilting objects and their mutations in the same cluster category (considered with the $(d+2)$-angulated structure).

We note that all the $d$-exangulated categories appearing in Theorems~\ref{thm:intro_1} and \ref{thm:intro_2} satisfy a higher-dimensional version of defining properties of $0$-Auslander categories from \cite{GNP2} and so can be thought of $0$-Auslander $d$-exangulated categories. A version of this notion has been introduced in \cite{her_n_exang}; see \cite[Remark 4.5]{GNP2} for a discussion on weakenings of the definition in the extriangulated case. 


As mentioned above, all known proofs of the general version of (2) in the extriangulated case make heavy use of dg enhancements.
The Auslander correspondence and its higher analogues due to Iyama have been recently considered in the context of dg-enhanced $(d+2)$-angulated categories by Jasso and Muro and have found important applications in the homological minimal model program as observed in their work with Keller \cite{JKM1, JKM2}. Dg enhancements of more general $d$-exangulated categories have been introduced very recently by Mochizuki and Nakaoka \cite{MN} under the name of higher exact dg categories.
We expect Theorem~\ref{thm:intro_2} to fit into a higher-dimensional version of Chen's 0-Auslander correspondence \cite{chen20230,Chen24}, which would take place in the framework of higher exact dg categories.

\section*{Acknowledgments}

This note was written in 2021--2022 
and was originally intended as a bridge between 
(then unfinished) \cite{FGPPP} and \cite{GNP2}, on one side, and \cite{njw-mut, njw-icra}, on the other side. By the time the last of these works was finished, our focus had 
shifted to other topics. It was not until May 2026 that we looked at this again and realised that we had an essentially complete draft on our shelves. MG would like to thank Xin Fang, Hiroyuki Nakaoka, Yann Palu, Pierre-Guy Plamondon, and Matthew Pressland for the aforementioned collaborations related to this work.

Parts of this work were done during stays of MG at the University of Stuttgart, and he is very grateful to Steffen Koenig for the hospitality. MG acknowledges support by the French ANR grant CHARMS (ANR-19-CE40-0017) and by the Deutsche Forschungsgemeinschaft (DFG, German Research Foundation) – SFB 1624 – ``Higher structures, moduli spaces and integrability'' – 506632645.
NJW is currently supported by EPSRC grant EP/W001780/1.

\section{Background}

We refer to \cite{hln} for definition and basic properties of $d$-exangulated categories. These generalise $(d+2)$-angulated categories of \cite{gko}. For $d = 1$, $d$-exangulated categories are precisely the same as extriangulated categories of Nakaoka--Palu \cite{NP1}. These in turn axiomatise extension-closed subcategories of triangulated categories, and in particular generalise Quillen exact categories and triangulated categories.

All the categories we consider in this paper are additive, linear over a field $K$, $\Hom$-finite, and idempotent-complete. 
In particular, the Krull--Remak--Schmidt property holds for them (for example, \cite{shah}); we rely on this heavily, considering only indecomposable objects in some of the proofs.  

\bigskip\bigskip

We first give background on the higher Auslander algebras of type $A$ \cite{iy-clus} and the structure of relevant subcategories of their module category and derived category, as well as their $(d + 2)$-angulated cluster categories.

\begin{convention}
Our convention will be that, given a tuple $A \in [m]^{d + 1}$, the entries of $A$ are always $(a_{0}, a_{1}, \dots, a_{d})$. The same applies to tuples denoted by other letters.
\end{convention}

\subsubsection{The higher Auslander algebras of type $A$}

We begin by defining the higher Auslander algebras of type~$A$ using quivers with relations. Following \cite{ot}, we denote
\begin{align*}
\modset{m}{d} &:= \{A \in [m]^{d + 1} \st \forall i \in \{0, 1, \dots, d - 1\}, a_{i + 1} \geqslant a_{i} + 2 \}.
\end{align*}

Let $Q^{(d, n)}$ be the quiver with vertices \[Q_{0}^{(d, n)} := \modset{n + 2d - 2}{d - 1}\] and arrows \[Q_{1}^{(d, n)} := \{A \rightarrow A + 1_{i} \st A, A + 1_{i} \in Q_{0}^{(d, n)}\},\] where \[1_{i}:= (0, \dots, 0, \overset{i}{1}, 0, \dots, 0).\] Examples of these quivers are shown in Figure~\ref{fig:quiv_example}.

\begin{figure}
\caption{Examples of the quivers $Q^{(d, n)}$}\label{fig:quiv_example}
\[
\begin{tikzpicture}[scale=0.75]

\begin{scope}[shift={(-3,0)}]


\node(1) at (0,0) {1};
\node(2) at (1,1) {2};
\node(3) at (2,2) {3};

\draw[->] (1) -- (2);
\draw[->] (2) -- (3);


\node at (1,3) {$Q^{(1, 3)}$};

\begin{scope}[shift={(-0.5,0)}]


\node(13) at (3,0) {13};
\node(14) at (4,1) {14};
\node(15) at (5,2) {15};
\node(24) at (5,0) {24};
\node(25) at (6,1) {25};
\node(35) at (7,0) {35};

\draw[->] (13) -- (14);
\draw[->] (14) -- (15);
\draw[->] (14) -- (24);
\draw[->] (15) -- (25);
\draw[->] (24) -- (25);
\draw[->] (25) -- (35);


\node at (5,3) {$Q^{(2, 3)}$};

\end{scope}

\end{scope}

\begin{scope}[shift={(-3.5,0)}]


\node(135) at (8.5,0) {135};
\node(136) at (9.5,1) {136};
\node(137) at (10.5,2) {137};
\node(146) at (11,1) {146};
\node(147) at (12,2) {147};
\node(157) at (13.5,2) {157};
\node(246) at (12,0) {246};
\node(247) at (13,1) {247};
\node(257) at (14.5,1) {257};
\node(357) at (15.5,0) {357};

\draw[->] (135) -- (136);
\draw[->] (136) -- (137);
\draw[->] (136) -- (146);
\draw[->] (137) -- (147);
\draw[->] (146) -- (147);
\draw[->] (147) -- (157);
\draw[->] (146) -- (246);
\draw[->] (147) -- (247);
\draw[->] (157) -- (257);
\draw[->] (246) -- (247);
\draw[->] (247) -- (257);
\draw[->] (257) -- (357);


\node at (12,3) {$Q^{(3, 3)}$};

\end{scope}

\end{tikzpicture}
\]
\end{figure}

Let $A_{n}^{d}$ be the quotient of the path algebra $KQ^{(d, n)}$ by the relations:
\begin{equation*}
A \rightarrow A + 1_{i} \rightarrow A + 1_{i} + 1_{j} = 
\left\{
	\begin{array}{ll}
		A \rightarrow A + 1_{j} \rightarrow A + 1_{i} + 1_{j} & \quad \text{if } A + 1_{j} \in Q_{0}^{(d, n)} \\
		\hfil 0 & \quad \text{otherwise.}
	\end{array}
\right.
\end{equation*}
We multiply arrows as if we were composing functions, so that $\xrightarrow{\alpha}\xrightarrow{\beta}=\beta\alpha$.

We will be able to describe many aspects of the representation theory of these algebras combinatorially. In particular, the following notion will be key. We say that a tuple $A$ \emph{intertwines} a tuple $B$ if \[a_{0} < b_{0} < a_{1} < b_{1} < \dots < a_{d} < b_{d}.\] In this case, we write $A \wr B$. If $A \wr B$ or $B \wr A$, then we say that $A$ and $B$ \emph{are intertwining} and write $A \swr B$.

\subsubsection{The module category}\label{sect:hart:hara:modcat}

Given a finite-dimensional algebra $\Lambda$ over a field $K$, let $\mathcal{M}$ be a functorially finite subcategory of $\modules \Lambda$.
(For the notion of functorially finite, see for instance \cite[Definition~2.1]{iya-mos}.)
Then we call $\mathcal{M}$ \emph{$d$-cluster-tilting} if
\begin{align*}
\mathcal{M} &= \{ X \in \modules\Lambda \st \forall i \in [d-1], \forall M \in \mathcal{M}, \mathrm{Ext}_{\Lambda}^{i}(X,M) = 0 \} \\
&= \{ X \in \modules\Lambda \st \forall i \in [d-1], \forall M \in \mathcal{M}, \mathrm{Ext}_{\Lambda}^{i}(M,X) = 0\}.
\end{align*}

In the case $d = 1$, the conditions should be interpreted as being trivial, so that $\modules \Lambda$ is the unique $1$-cluster-tilting subcategory of $\modules \Lambda$. If $\additive M$ is a $d$-cluster-tilting subcategory, for $M \in \modules \Lambda$, then we say that $M$ is a \emph{$d$-cluster-tilting module}. We say that $\Lambda$ is \emph{$d$-representation-finite $d$-hereditary} if there exists a $d$-cluster-tilting module in $\modules \Lambda$ and $\mathrm{gl.dim}\,\Lambda \leqslant d$, following \cite{io,hio,jk-nak}. It is shown in \cite{iy-clus} that the algebra $A_{n}^{d}$ is $d$-representation-finite $d$-hereditary with unique basic $d$-cluster-tilting module $M^{(d,n)}$ and that \[ A_{n}^{d+1} \cong \End_{A_{n}^{d}}M^{(d,n)}.\]

\begin{notation}
We use the following notation from \cite{ot}: for $A, B \in \mathbb{Z}^{d + 1}$ and $I \subseteq \{0, 1, \dots, d\}$, we write $m_{I}(A, B) = C$, where $c_{i} = a_{i}$ if $i \in I$ and $c_{i} = b_{i}$ if $i \notin I$.

We also write $\onevec$ for the tuple $(1, 1, \dots, 1)$.
\end{notation}

Oppermann and Thomas show that the category $\additive M^{(d, n)}$ may be described combinatorially. 

\begin{theorem}[{\cite[Theorem~3.6, Theorem~3.8(4), Proposition~3.12(3)]{ot}}]\label{thm:mod_desc}
There is a bijection $A \mapsto M_{A}$ between $\modset{n + 2d}{d}$ and the isomorphism classes of indecomposable modules of $\additive M^{(d, n)}$ such that the following properties hold.
\begin{enumerate}
\item $M_{A}$ is projective if and only if $a_{0} = 1$.\label{op:mod_desc:proj}
\item $M_{A}$ is injective if and only if $a_{d} = n + 2d$.\label{op:mod_desc:inj}
\item $\Hom_{A_{n}^{d}}(M_{B}, M_{A}) \neq 0$ if and only if $(B - \onevec) \wr A$, and in this case the $\Hom$-space is one-dimensional.\label{op:mod_desc:hom}
\item $\Ext_{A_{n}^{d}}^{d}(M_{B}, M_{A}) \neq 0$ if and only if $A \wr B$, and in this case the $\Ext$-space is one-dimensional.\label{op:mod_desc:ext}
\item Moreover, if $\Ext_{A_{n}^{d}}^{d}(M_{B}, M_{A}) \neq 0$, then there is a non-split exact sequence \[0 \to M_{A} \to E_{d} \to E_{d - 1} \to \dots \to E_{1} \to M_{B} \to 0,\] where \[E_{r} = \bigoplus_{\substack{I \subseteq \{0, 1, \dots, d\} \\ m_{I}(A, B) \in \modset{n + 2d}{d} \\ |I| = r}} M_{m_{I}(A, B)}.\]\label{op:mod_desc:seq}
\item The unique-up-to-scalar morphisms $M_{A} \to M_{B}$ and $M_{B} \to M_{C}$ compose to give a non-zero morphism $M_{A} \to M_{C}$ if and only if $(A - \onevec) \wr C$.\label{op:mod_desc:comp}
\end{enumerate}
\end{theorem}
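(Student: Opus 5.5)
This is a result cited from Oppermann--Thomas, so the plan is to reconstruct their argument along the inductive structure $A_{n}^{d+1} \cong \End_{A_{n}^{d}} M^{(d,n)}$, rather than to prove anything new.

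\textbf{Step 1: the bijection and parts (1)--(3).} I will define $M_A$ for $A \in \modset{n+2d}{d}$ as an interval-type module over $A_n^d$. Set $A' = (a_0, \dots, a_{d-1})$ and $A'' = (a_1 - 2, \dots, a_d - 2)$ in the vertex indexing $\modset{n+2d-2}{d-1}$ of $Q^{(d,n)}$. Then $M_A$ is supported on the vertices $X$ with $A' \le X$ componentwise and $X$ intertwining $A''$ up to the shift by $\onevec$, with all arrows acting by identity inside the support. Checking that the relations hold is immediate, since the support is convex and the commutativity relations are respected.

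Next I identify the projectives. These are the modules with $a_0 = 1$, i.e.\ $P_{X}$ for $X = (a_1 - 1, \dots)$ after reindexing. The injectives are dual, with $a_d = n+2d$. This gives (1) and (2).

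For (3), a morphism between two such thin, convex-support modules is determined by a scalar. It exists precisely when the support of $M_A$ is a ``down-closed'' region in the support of $M_B$ relative to the other's ``up-closed'' region. Translating componentwise inequalities gives $b_0 - 1 < a_0 < b_1 - 1 < \cdots$, i.e.\ $(B - \onevec) \wr A$. Part (6) follows from the same analysis: the image of a composite is nonzero iff the supports of $M_A$ and $M_C$ overlap in the required way, which gives $(A - \onevec) \wr C$.

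\textbf{Step 2: $d$-cluster-tilting.} To show that $\additive\bigoplus M_A$ is $d$-cluster tilting, I will use Iyama's result that $A_n^d$ is $d$-representation-finite with $M^{(d,n)} = \bigoplus_{i \ge 0} \tau_d^{-i} A_n^d$. The plan is to compute $\tau_d^{-}$ combinatorially as $M_A \mapsto M_{A + \onevec}$ whenever $A + \onevec \in \modset{n+2d}{d}$, and zero otherwise. I will do this via the minimal projective resolution, using the fact that the Koszul-like resolution of $M_A$ is indexed by the subsets $I$. This identifies the indecomposables of $\additive M^{(d,n)}$ with the $M_A$.

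\textbf{Step 3: $\Ext^d$ via the $d$-Auslander--Reiten formula.} I will compute $\Ext^d(M_B, M_A) \cong D\overline{\Hom}(M_A, \tau_d M_B) = D\overline{\Hom}(M_A, M_{B - \onevec})$. By (3), the space $\Hom(M_A, M_{B-\onevec})$ is nonzero iff $(A - \onevec) \wr (B - \onevec)$, i.e.\ iff $A \wr B$, and one must check that this morphism does not factor through an injective. That gives (4).

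\textbf{Step 4: the exact sequence (5), which I expect to be the main obstacle.} I will show that the displayed complex, with maps given by the unique morphisms with signs $(-1)^{\text{position}}$ as in a Koszul complex on the cube of subsets $I$, is exact. Exactness can be checked vertex by vertex. At each vertex every $M_C$ is $0$ or $K$, so the complex restricts to an augmented simplicial cochain complex of a subcube cut out by the intertwining constraints, which is acyclic unless it is empty. The hard part is verifying that the cut-out set of $I$ is always a full face of the cube (and hence contractible), except when it is empty. Non-splitness then follows because the sequence represents a nonzero class: its end terms $M_A$ and $M_B$ have no Hom-factorisation through the middle terms.
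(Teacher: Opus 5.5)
Steps 1--3 are a sensible reconstruction; the paper gives no proof here and simply quotes Oppermann--Thomas. Step 4, which you rightly call the crux, rests on the wrong criterion. Fix a vertex $X$ and write $C = m_I(A,B)$. The condition $X \in \operatorname{supp} M_C$, i.e.\ $c_j \le x_j$ and $x_{j-1}+2 \le c_j$, involves each index $j$ of $I$ separately. So each $j$ is independently forced into $I$, forced out of $I$, free, or contradictory. The admissible $I$ therefore automatically form a face of the cube or the empty set. This is the easy part, and it uses only $a_j < b_j$. The restricted complex is then $\bigotimes_{j\ \mathrm{free}} (K \xrightarrow{\pm 1} K)$, which is acyclic if and only if \emph{at least one index is free}.

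A face consisting of a single $I$ is nonempty and contractible, yet at such an $X$ the complex is $K$ in one degree, so exactness fails. For example, $M_A \to E_d$ would not be injective at $X$ if that $I$ were $\{0,\dots,d\}$. So ``full face, hence contractible'' does not give exactness. The lemma you actually need is that no vertex lies in the support of exactly one term, and this is where the rest of $A \wr B$ enters. Index $0$ can only be forced in (when $a_0 \le x_0 < b_0$), and index $d$ can only be forced out (when $a_d - 2 < x_{d-1} \le b_d - 2$). So if every index were forced, there would be some $j$ forced in with $j+1$ forced out. That means $a_{j+1}-1 \le x_j \le b_j - 1$, contradicting $b_j < a_{j+1}$.

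With this lemma, Step 4 goes through, together with the routine check that $d^2=0$. Both composites around a square of the cube equal the identity on the intersection of the two outer boxes, since each middle box contains that intersection; the Koszul signs then make them cancel. Non-splitness is simply $\Hom(E_d, M_A) = 0$ by part (3), since each summand of $E_d$ has some entry $b_j > a_j$.

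Two smaller points. In Step 2, minimal projective resolutions compute $\tau_d$, not $\tau_d^{-}$. For $b_0 > 1$, the resolution of $M_B$ is exactly the sequence of part (5) for the pair $(1, b_0+1, \dots, b_{d-1}+1) \wr B$, where only initial segments $I=\{0,\dots,r-1\}$ survive. Applying $\nu$ to it gives $\tau_d M_B = M_{B-\onevec}$. So the vertexwise argument must come before Step 2, and $\tau_d^{-}$ is then obtained by inverting, or dually from injective coresolutions. In Step 1, with the box supports, part (1) holds for the convention in which $P_X$ is spanned by paths ending at $X$. Then $M_{(1,a_1,\dots,a_d)} = P_{(a_1-2,\dots,a_d-2)}$, not $P_{(a_1-1,\dots)}$.
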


The category $\additive M^{(d, n)}$ is $d$-exact in the sense of \cite{jasso}, and therefore $d$-exangulated by \cite[Proposition 4.34]{hln}. Indeed, we have that $\bbe(M_{B}, M_{A}) = \Ext_{A_{n}^{d}}^{d}(M_{B}, M_{A})$, with the exact realisation $\fs$ given by sending elements of $\bbe(M_{B}, M_{A})$ to homotopy equivalence classes of the sequence in (\ref{op:mod_desc:seq}), provided this extension space is non-zero. This exact realisation is unique up to scalar. The category $\additive M^{(2, 3)}$ is illustrated in Figure~\ref{fig:A32_ar_quiv}.

\begin{figure}
\caption{The category $\additive M^{(2, 3)}$}\label{fig:A32_ar_quiv}
\[
\begin{tikzpicture}

\node(135) at (-3.5,0) {$M_{135}$};
\node(136) at (-2.5,1) {$M_{136}$};
\node(137) at (-1.5,2) {$M_{137}$};
\node(146) at (-1,1) {$M_{146}$};
\node(147) at (0,2) {$M_{147}$};
\node(157) at (1.5,2) {$M_{157}$};
\node(246) at (0,0) {$M_{246}$};
\node(247) at (1,1) {$M_{247}$};
\node(257) at (2.5,1) {$M_{257}$};
\node(357) at (3.5,0) {$M_{357}$};

\draw[->] (135) -- (136);
\draw[->] (136) -- (137);
\draw[->] (136) -- (146);
\draw[->] (137) -- (147);
\draw[->] (146) -- (147);
\draw[->] (147) -- (157);
\draw[->] (146) -- (246);
\draw[->] (147) -- (247);
\draw[->] (157) -- (257);
\draw[->] (246) -- (247);
\draw[->] (247) -- (257);
\draw[->] (257) -- (357);

\end{tikzpicture}
\]
\end{figure}

\subsubsection{The derived category}\label{sect:hart:hara:dercat}

Given a triangulated category $\mathcal{D}$, a functorially finite subcategory $\mathcal{C}$ of $\mathcal{D}$ is called \emph{$d$-cluster-tilting} if
\begin{align*}
\mathcal{C} &= \{X \in \mathcal{D} \st \forall i \in [d - 1], \forall Y \in \mathcal{C}, \Hom_{\mathcal{D}}(X, Y[i]) = 0\}\\
&= \{X \in \mathcal{D} \st \forall i \in [d - 1], \forall Y \in \mathcal{C}, \Hom_{\mathcal{D}}(Y, X[i]) = 0\}.
\end{align*}
Gei\ss, Keller, and Oppermann show that a $d$-cluster-tilting subcategory of a triangulated category is a \emph{$(d + 2)$-angulated category} \cite{gko}.

Given a $d$-representation-finite $d$-hereditary algebra with $d$-cluster-tilting module~$M$, the subcategory \[\derclus := \additive \{M[id] \in \mathcal{D}_{\Lambda} \st i \in \mathbb{Z}\}\] is a $d$-cluster-tilting subcategory of $\mathcal{D}_{\Lambda}$. Here we have used $\mathcal{D}_{\Lambda}$ to denote the bounded derived category of finitely generated $\Lambda$-modules $\bder$.

Just as the indecomposable objects of $M^{(d, n)}$ may be labelled by tuples in a way that concords with homomorphisms and extensions, the indecomposables of $\derclusa$ also admit a similar labelling. We denote \[\derset{m}{d} = \left\{(a_{0},\dots,a_{d}) \in \mathbb{Z}^{d+1} \,\middle|\, \parbox{5cm}{\begin{center}$\forall i \in \{ 0, 1, \dots ,d-1 \},$ $a_{i+1}\geqslant a_{i}+2 \text{ and } a_{d} + 2 \leqslant a_{0} + m$\end{center}}\right\}.\] We have the following result.

\begin{theorem}[{\cite[Proof of Proposition 6.1, Lemma 6.6, Lemma 6.7]{ot}}]\label{thm:der_desc}
There is a bijection $A \mapsto \U_{A}$ between $\derset{n + 2d + 1}{d}$ and the isomorphism classes of indecomposable objects of $\derclusa$ such that the following properties hold.
\begin{enumerate}
\item $\U_{A}[d] = \U_{(a_{1} - 1, a_{2} - 1, \dots, a_{d} - 1, a_{0} + n + 2d)}$.\label{op:der_desc:shift}
\item $\Hom_{\dla}(\U_{B}, \U_{A}) \neq 0$ if and only if \[b_{0} - 1 < a_{0} < b_{1} - 1 < a_{1} < \dots < b_{d} - 1 < a_{d} < b_{0} + n + 2d.\] and in this case the $\Hom$-space is one-dimensional.\label{op:der_desc:hom}
\item $\Hom_{\dla}(\U_{B}, \U_{A}[d]) \neq 0$ if and only if \[a_{0} < b_{0} < a_{1} < b_{1} < \dots < a_{d} < b_{d} < a_{0} + n + 2d + 1,\] with this space one-dimensional.\label{op:der_desc:ext}
\item Moreover, if $\Hom_{\dla}(\U_{B}, \U_{A}[d]) \neq 0$, then there is a $(d + 2)$-angle \[\U_{A} \to F_{d} \to F_{d - 1} \to \dots \to F_{1} \to \U_{B} \to \U_{A}[d],\] where \[F_{r} = \bigoplus_{\substack{I \subseteq \{0, 1, \dots, d\} \\ m_{I}(A, B) \in \derset{n + 2d + 1}{d} \\ |I| = r}}  \U_{m_{I}(A, B)},\] which is not nullhomotopic.\label{op:der_desc:seq}
\item The unique-up-to-scalar morphisms $\U_{A} \to \U_{B}$ and $\U_{B} \to \U_{C}$ compose to give a non-zero morphism $\U_{A} \to \U_{C}$ if and only if \[a_{0} - 1 < c_{0} < a_{1} - 1 < c_{1} < \dots < a_{d} - 1 < c_{d} < a_{0} + n + 2d.\]\label{op:der_desc:comp}
\end{enumerate}
\end{theorem}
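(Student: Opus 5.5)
The plan is to build the labelling from Theorem~\ref{thm:mod_desc} and then transport everything along the shift $[d]$. Since $A_{n}^{d}$ has global dimension at most $d$, the complex $M[id]$ has homology concentrated in degree $-id$. So the objects $M_{A}[id]$, for $A\in\modset{n+2d}{d}$ and $i\in\mathbb{Z}$, are pairwise non-isomorphic indecomposables, and by Krull--Remak--Schmidt they exhaust the indecomposables of $\derclusa$. Let $\sigma(A)=(a_{1}-1,\dots,a_{d}-1,a_{0}+n+2d)$. One checks directly that $\sigma$ is a bijection of $\derset{n+2d+1}{d}$ to itself: the inequality $a_{d}+2\leqslant a_{0}+n+2d+1$ becomes the new gap condition, and conversely. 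I also claim that every $\sigma$-orbit meets the set $\{A : a_{0}\geqslant 1,\ a_{d}\leqslant n+2d\}=\modset{n+2d}{d}$ exactly once. To see this, note that $\sigma^{d+1}$ adds $n+2d-1$ to every coordinate, and that within one period the value of $a_{0}$ increases strictly along the orbit, so the normalisation pins down a unique representative. We then define $\U_{\sigma^{i}(A)}:=M_{A}[id]$ for $A\in\modset{n+2d}{d}$. This gives the bijection, and property (\ref{op:der_desc:shift}) holds by construction.

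For (\ref{op:der_desc:hom}) and (\ref{op:der_desc:ext}), the key observation is that both cyclic inequality chains are invariant under replacing $(A,B)$ by $(\sigma A,\sigma B)$. Indeed, $\sigma$ cyclically rotates the chain and uses the wrap-around term $a_{0}+n+2d$ (respectively $a_{0}+n+2d+1$), so rotating the chain by one pair of entries reproduces the same system of inequalities. Since $[d]$ is an autoequivalence, the Hom-sides are invariant too, so we may assume $B\in\modset{n+2d}{d}$ and $A=\sigma^{j}(A')$ with $A'\in\modset{n+2d}{d}$. Then $\Hom(\U_{B},\U_{A}[kd])=\Ext^{(j+k)d}(M_{B},M_{A'})$, which by global dimension vanishes unless $j+k\in\{0,1\}$. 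The two surviving cases are exactly parts (\ref{op:mod_desc:hom}) and (\ref{op:mod_desc:ext}) of Theorem~\ref{thm:mod_desc}. What remains is a case check: when $j+k\notin\{0,1\}$ the cyclic chain must fail, and when $j+k\in\{0,1\}$ the chain must reduce to $(B-\onevec)\wr A'$, respectively $A'\wr B$. For the latter, the wrap-around inequality is automatic from $1\leqslant a_{0},b_{0}$ and $a_{d},b_{d}\leqslant n+2d$. For the former, one uses that $\sigma^{\pm1}$ moves a coordinate past the window of length $n+2d$.

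For (\ref{op:der_desc:seq}), I reduce by the same invariance to $A,B\in\modset{n+2d}{d}$ with $A\wr B$. This reduction needs the identity $m_{I}(\sigma A,\sigma B)=\sigma\, m_{I'}(A,B)$, where $I'$ is the cyclic rotation of $I$; note that $|I'|=|I|$. The $d$-exact sequence of Theorem~\ref{thm:mod_desc}(\ref{op:mod_desc:seq}) gives a $(d+2)$-angle in $\derclusa$ via the standard construction of \cite{gko}: splice into short exact sequences, which yield triangles, and compose the connecting morphisms to get $M_{B}\to M_{A}[d]$. This $(d+2)$-angle is not nullhomotopic because the extension is non-split. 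It remains to check that, for $A\wr B$ inside the fundamental domain, $m_{I}(A,B)\in\derset{n+2d+1}{d}$ if and only if $m_{I}(A,B)\in\modset{n+2d}{d}$. This holds because all entries already lie in $[1,n+2d]$, so the wrap-around condition is automatic.

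Part (\ref{op:der_desc:comp}) again reduces by invariance, and I expect it to be the main obstacle. When all three objects lie in one copy of $\additive M^{(d,n)}$, it is Theorem~\ref{thm:mod_desc}(\ref{op:mod_desc:comp}). When one of the morphisms has degree $d$, however, the composite is a Yoneda product $\Hom\times\Ext^{d}\to\Ext^{d}$. For this case I would first try to decide non-vanishing using the explicit $d$-exact sequences: the composite with $M_{B}\to M_{A}[d]$ vanishes if and only if the morphism lifts through the terms $E_{1}$ (or $E_{d}$). Then I would translate this lifting condition into the cyclic inequality using parts (\ref{op:mod_desc:hom}) and (\ref{op:mod_desc:comp}). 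The delicate part is the bookkeeping of the boundary cases where the wrap-around inequality becomes active.
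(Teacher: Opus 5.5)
The paper gives no proof of this statement; it is quoted from Oppermann--Thomas, so your argument has to stand on its own. The labelling and parts (\ref{op:der_desc:shift})--(\ref{op:der_desc:ext}) are fine, apart from one slip. $\sigma^{d+1}$ is translation by $(n+d)\onevec$, not by $(n+2d-1)\onevec$. The ``exactly once'' claim is best seen from $\sigma(A)_0=a_1-1>a_0$ and $\sigma(A)_d=a_0+n+2d>a_d$, combined with $\sigma(B)_d>n+2d$ and $(\sigma^{-1}B)_0=b_d-n-2d\leqslant 0$ for $B\in\modset{n+2d}{d}$.

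The genuine gap is in (\ref{op:der_desc:seq}): with $\sigma$ alone you cannot reduce to $A,B\in\modset{n+2d}{d}$ with $A\wr B$. Write $B\in\sigma^{i}(\modset{n+2d}{d})$ and $A\in\sigma^{i+j}(\modset{n+2d}{d})$; the relative position $j$ is $\sigma$-invariant. Your own analysis of (\ref{op:der_desc:ext}) shows that $\Hom(\U_B,\U_A[d])\neq0$ also occurs for $j=-1$, where $\U_B\to\U_A[d]$ is an honest module map $M_B\to M_{A'}$. For example, take $d=1$, $n=3$, $A=(0,3)$, $B=(1,4)$. The required triangle is $M_{25}[-1]\to M_{35}[-1]\oplus M_{13}\to M_{14}\to M_{25}$, which records the kernel and cokernel of $M_{14}\to M_{25}$. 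It is not a spliced $d$-exact sequence, and not a rotation of one either, since its middle term mixes two degrees. The extreme case $B=\sigma A$ is also of this kind.

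The missing ingredient is a second symmetry, $\nu_d=\nu\circ[-d]$. It is a triangle autoequivalence preserving $\derclusa$, hence it preserves its $(d+2)$-angles. Serre duality gives $\Hom(\U_B,\nu_d\U_A[d])\cong D\Hom(\U_A,\U_B)$. Comparing your (\ref{op:der_desc:hom}) with (\ref{op:der_desc:ext}) then yields $\nu_d\U_A\cong\U_{A-\onevec}$. This works because the set of $B$ with $\Hom(\U_B,\U_{A'}[d])\neq0$ has coordinatewise minimum $A'+\onevec$, so it determines $A'$. The formula for $F_r$ is visibly invariant under translation.

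Now look at the cyclic gaps $a_1-b_0,\dots,a_d-b_{d-1},\,a_0+n+2d+1-b_d$. They all equal $1$ exactly when $B=\sigma A$; in that case every $F_r=0$ and the $(d+2)$-angle is trivial. Otherwise, rotate by $\sigma$ until the last gap is $\geqslant 2$, then translate so that $a_0=1$. This lands in $\modset{n+2d}{d}$ with $A\wr B$, where your splicing argument applies.

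The same symmetry settles (\ref{op:der_desc:comp}), which you only sketched, without any Yoneda-product bookkeeping. Translate so that $a_0=1$. Then (\ref{op:der_desc:hom}), applied to $\U_A\to\U_B\to\U_C$, forces $A,B\in\modset{n+2d}{d}$ and $c_0\geqslant 1$.
\begin{itemize}
\item If $c_d\leqslant n+2d$, all three objects are modules, and Theorem~\ref{thm:mod_desc}(\ref{op:mod_desc:comp}) gives exactly the stated condition; the wrap-around inequality is automatic.
\item Otherwise $c_d\geqslant a_0+n+2d$. Then the stated condition fails, and $\Hom(\U_A,\U_C)=0$ by (\ref{op:der_desc:hom}), so the composite vanishes.
\end{itemize}
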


By \cite[Theorem 1]{gko}, we have that $\derclusa$ is a $(d + 2)$-angulated category. Hence, by \cite[Proposition 4.5]{hln}, we have that $\derclusa$ is a $d$-exangulated category. We have that $\bbe(\U_{B}, \U_{A}) = \Hom_{\dla}(\U_{B}, \U_{A}[d])$, with the exact realisation $\fs$ given by sending elements of $\bbe(\U_{B}, \U_{A})$ to homotopy equivalence classes of the $(d + 2)$-angle in (\ref{op:mod_desc:seq}), provided this extension space is non-zero. This exact realisation is unique up to scalar. The category $\argclus{A_{3}^{2}}$ is illustrated in Figure~\ref{fig:a32_dercat_lab}.

\begin{figure}
\caption{The category $\argclus{A_{3}^{2}}$}\label{fig:a32_dercat_lab}
\[
\begin{tikzpicture}[xscale=0.7]


\node(zzcz) at (0,0){$\U_{135}$};
\node(azbz) at (6,0){$\U_{146}$};
\node(bzaz) at (12,0){$\U_{157}$};
\node(zabz) at (3,1){$\U_{136}$};
\node(aaaz) at (9,1){$\U_{147}$};
\node(zbaz) at (6,2){$\U_{137}$};

\node(zzca) at (2,3){$\U_{246}$};
\node(azba) at (8,3){$\U_{257}$};
\node(bzaa) at (14,3){$\U_{268}$};
\node(zaba) at (5,4){$\U_{247}$};
\node(aaaa) at (11,4){$\U_{258}$};
\node(zbaa) at (8,5){$\U_{248}$};

\node(zzcb) at (4,6){$\U_{357}$};
\node(azbb) at (10,6){$\U_{368}$};
\node(bzab) at (16,6){$\U_{379}$};
\node(zabb) at (7,7){$\U_{358}$};
\node(aaab) at (13,7){$\U_{369}$};
\node(zbab) at (10,8){$\U_{359}$};

\node at (0,-1){\dots};
\node at (6,-1){\dots};
\node at (12,-1){\dots};

\node at (4,9){\dots};
\node at (10,9){\dots};
\node at (16,9){\dots};


\draw[->] (zzcz) -- (zabz);
\draw[->] (zabz) -- (azbz);
\draw[->] (azbz) -- (aaaz);
\draw[->] (aaaz) -- (bzaz);
\draw[->] (zabz) -- (zbaz);
\draw[->] (zbaz) -- (aaaz);

\draw[->] (zzca) -- (zaba);
\draw[->] (zaba) -- (azba);
\draw[->] (azba) -- (aaaa);
\draw[->] (aaaa) -- (bzaa);
\draw[->] (zaba) -- (zbaa);
\draw[->] (zbaa) -- (aaaa);

\draw[->] (zzcb) -- (zabb);
\draw[->] (zabb) -- (azbb);
\draw[->] (azbb) -- (aaab);
\draw[->] (aaab) -- (bzab);
\draw[->] (zabb) -- (zbab);
\draw[->] (zbab) -- (aaab);

\draw[->] (azbz) -- (zzca);
\draw[->] (bzaz) -- (azba);
\draw[->] (aaaz) -- (zaba);

\draw[->] (azba) -- (zzcb);
\draw[->] (bzaa) -- (azbb);
\draw[->] (aaaa) -- (zabb);

\end{tikzpicture}
\]
\end{figure}

\subsubsection{The cluster category}

Given a $d$-representation-finite $d$-hereditary algebra $\Lambda$, the \emph{$(d + 2)$-angulated cluster category} of $\Lambda$ \cite[Definition 5.22]{ot} is defined to be the orbit category \[\otclus = \frac{\derclus}{\nu_{d}[-d]}\,,\] where $\derclus$ is the $d$-cluster-tilting subcategory of $\mathcal{D}_{\Lambda}$ from the previous section.

We can describe the $(d + 2)$-angulated cluster category $\otclusa$ combinatorially using the set \[\nonconsec{m}{d} := \{A \in \modset{m}{d} \st a_{d} \leqslant a_{0} + m - 2\}.\]

\begin{convention}
When we describe the cluster category $\otclusa$ combinatorially, we will use arithmetic modulo $n + 2d + 1$ and implicitly re-order tuples so that they are increasing. This in particular applies when we write things like $A - \onevec$.
\end{convention}

\begin{theorem}[{\cite[Proposition~6.1 and Theorem~5.2(3)]{ot}}]\label{thm:clus_desc}
There is a bijection $A \mapsto O_{A}$ between $\nonconsec{n + 2d + 1}{d}$ and the isomorphism classes of indecomposable objects of $\otclusa$ such that the following properties hold.
\begin{enumerate}
\item $O_{A}[d] = O_{A - \onevec}$.\label{op:clus_desc:shift}
\item $\Hom_{\otclusa}(O_{B}, O_{A}) \neq 0$ if and only if $(B - \onevec) \swr A$\label{op:clus_desc:hom}
\item For indecomposables $O_{A}, O_{B}$ of $\otclusa$, we have that $\Hom_{\otclusa}(O_{B}, O_{A}[d]) \neq 0$ if and only if $A \swr B$.\label{op:clus_desc:ext}
\item Moreover, if $\Hom_{\otclusa}(O_{B}, O_{A}[d]) \neq 0$, then there is a distinguished $(d + 2)$-angle \[O_{A} \to G_{d} \to G_{d - 1} \to \dots \to G_{1} \to O_{B} \to O_{A}[d],\] where \[G_{r} = \bigoplus_{\substack{I \subseteq \{0, 1, \dots, d\} \\ m_{I}(A, B) \in \derset{n + 2d + 1}{d} \\ |I| = r}}  O_{m_{I}(A, B)},\] which is not nullhomotopic.\label{op:clus_desc:seq}
\end{enumerate}
\end{theorem}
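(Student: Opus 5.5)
The plan is to deduce everything from Theorem~\ref{thm:der_desc} by passing to the orbit category $\otclusa = \derclusa/F$, where $F = \nu_{d}[-d]$. This uses the standard fact that for a suitable (here, by \cite{ot}, $d$-cluster-tilting orbit) category,
\[\Hom_{\otclusa}(X, Y) = \bigoplus_{k \in \mathbb{Z}} \Hom_{\derclusa}(X, F^{k}Y).\]

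\textbf{Step 1: identify $F$ on labels.} First I will compute the action of $F$ on the labels of Theorem~\ref{thm:der_desc}. Using \ref{op:der_desc:shift} of that theorem together with the description of $\nu_{d}$ on the $d$-cluster-tilting module, I expect $F$ to act on $\derset{n+2d+1}{d}$ by translation $A \mapsto A + (n+2d+1)\onevec$, possibly up to a cyclic re-indexing. Granting this, each $F$-orbit of labels contains exactly one representative whose entries reduce to distinct residues mod $n+2d+1$. The defining conditions of $\derset{n+2d+1}{d}$ ($a_{i+1} \geqslant a_{i}+2$ and $a_{d}+2 \leqslant a_{0}+n+2d+1$) say precisely that these residues, read cyclically, are pairwise non-consecutive. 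This gives the bijection with $\nonconsec{n+2d+1}{d}$. Since $F$ commutes with $[d]$, part~\ref{op:clus_desc:shift} then follows from \ref{op:der_desc:shift} of Theorem~\ref{thm:der_desc}: rotating the tuple and adding $n+2d$ becomes subtracting $\onevec$ modulo $n+2d+1$ after re-ordering.

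\textbf{Step 2: Hom and extensions.} Next I will treat parts~\ref{op:clus_desc:hom} and~\ref{op:clus_desc:ext}. By the direct-sum formula, $\Hom_{\otclusa}(O_{B}, O_{A}[d])$ is nonzero if and only if some lift $\U_{B}$ and some $F$-translate of $\U_{A}$ satisfy the strict interlacing of \ref{op:der_desc:ext} of Theorem~\ref{thm:der_desc}. That interlacing lives in a window of length $n+2d+1$. Read modulo $n+2d+1$, it says that $A$ and $B$ alternate on the circle, and translating $A$ by a multiple of $n+2d+1$ exchanges the roles of $A \wr B$ and $B \wr A$; this is why the condition becomes the symmetric relation $A \swr B$. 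The Hom statement is the same argument applied to \ref{op:der_desc:hom} of Theorem~\ref{thm:der_desc} with $B - \onevec$ in place of $B$. One must also check that at most finitely many summands contribute, which holds because $\derclusa$ is Hom-finite and the window condition pins down $k$ to at most two values.

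\textbf{Step 3: the angle.} For part~\ref{op:clus_desc:seq}, I will push the $(d+2)$-angle of \ref{op:der_desc:seq} of Theorem~\ref{thm:der_desc} forward along the projection functor. Oppermann--Thomas construct the $(d+2)$-angulation on $\otclusa$ so that this functor is $(d+2)$-angulated, so the image is a distinguished $(d+2)$-angle. The terms $\U_{m_{I}(A,B)}$ map to $O_{m_{I}(A,B)}$ after reducing labels, and the last morphism is the image of a nonzero element of a direct summand of the orbit Hom-space, hence nonzero. The angle is therefore not nullhomotopic.

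\textbf{Main obstacle.} I expect the main difficulty to be Step~1, pinning down the exact action of $\nu_{d}$ on labels. Once $F$ is known to act as translation by $(n+2d+1)\onevec$, the remaining work is the combinatorial translation between the linear window conditions and cyclic intertwining in Step~2.
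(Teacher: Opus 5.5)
Your plan breaks at the step you flagged: $F=\nu_d[-d]$ does not act on labels as translation by $N\onevec$ (with $N=n+2d+1$), not even up to re-indexing. By part (1) of Theorem~\ref{thm:der_desc}, $[-d]$ sends the label $C$ to $(c_d-n-2d,\,c_0+1,\dots,c_{d-1}+1)$. One also checks that $\nu_d\U_A=\U_{A-\onevec}$; on non-projective objects of $\additive M^{(d,n)}$ this is $\tau_dM_A=M_{A-\onevec}$. Hence
\[F(\U_A)=\U_{(a_d-N,\,a_0,\,a_1,\dots,a_{d-1})},\]
so $F$ rotates the tuple by one place, and only $F^{d+1}$ is a translation (by $-N\onevec$).

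With your formula you would be forming the orbit category for $F^{d+1}$, which has $d+1$ times too many indecomposables. For $A\in\nonconsec{N}{d}$, no translate of $(a_1,\dots,a_d,a_0+N)\in\derset{N}{d}$ by a multiple of $N\onevec$ has all entries in $[1,N]$. Concretely, for $\Lambda=K$ ($d=n=1$), translation by $4\onevec$ is $[4]=F^{-2}$, which gives four indecomposables instead of $K$ and $K[1]$. Also, every element of $\derset{N}{d}$ already has entries with distinct residues mod $N$, so that is not the right criterion. What you need is that each orbit contains exactly one tuple with entries in $[1,N]$. This holds precisely because an $F$-orbit runs through all $d+1$ cyclic rotations of a fixed residue set.

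The same error breaks the symmetrisation in Step~2. If $A,B$ have entries in $[1,N]$, then for $k\neq 0$ all entries of $A+kN\onevec$ lie strictly on one side of those of $B$, so no such translate interlaces $B$. What actually swaps the two cases is a single application of $F$. If $B\wr A$, then
\[a_d-N<b_0<a_0<b_1<\dots<a_{d-1}<b_d<a_d,\]
which is exactly the condition of part (3) of Theorem~\ref{thm:der_desc} for $\Hom(\U_B,(F\U_A)[d])\neq 0$. If instead $A\wr B$, the summand $k=0$ contributes. The same rotation accounts for the two cases of $(B-\onevec)\swr A$ in part (2).

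Accordingly, in Step~3 with $B\wr A$ you must push forward the angle for the pair $(F(A),B)$, whose terms are $O_{m_I(F(A),B)}$. So formula (4) has to be read with $A$ cyclically rotated. Read literally with $d=1$, $n=2$, $A=(2,4)$, $B=(1,3)$, it would give $G_1=O_{14}$, whereas $O_{13}=O_{24}[1]$ forces the middle term to be $0$.

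Once $F$ is corrected, the rest of your outline goes through: the orbit Hom formula, finiteness of the contributing summands, and pushing angles forward along the $(d+2)$-angulated projection. The paper itself gives no proof of this statement and imports it from \cite{ot}.
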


Composition of morphisms in the cluster category is slightly more complicated than in the previous cases. We require the following lemma to describe it. Recall that a \emph{cyclically shifted order} on $[m]$ is an order \[l < l+1 < \dots < m-1 < m < 1 < \dots < l-1,\] where $l \in [m]$.
The following lemma is the higher analogue of \cite[Lemma~4.2]{njw-om}.

\begin{lemma}\label{lem:clus_comp}
Suppose that we have non-zero morphisms $O_{A} \to O_{B}$ and $O_{B} \to O_{C}$. Then these morphisms compose to give a non-zero morphism $O_{A} \to O_{C}$ if and only if there is a cyclically shifted ordering of $[n + 2d + 1]$ such that \[a_{0} - 1 \leqslant b_{0} - 1 < c_{0} < a_{1} - 1 \leqslant b_{1} - 1 < c_{1} < \dots < a_{d} - 1 \leqslant b_{d} - 1 < c_{d} < a_{0} + n + 2d.\]
\end{lemma}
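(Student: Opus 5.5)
We reduce the computation to the derived category $\dla$, where composition is already described by Theorem~\ref{thm:der_desc}(\ref{op:der_desc:comp}), using the standard description of morphisms in the orbit category $\otclusa = \derclusa/\nu_{d}[-d]$. Write $F = \nu_{d}[-d]$. Then $\Hom_{\otclusa}(X, Y) = \bigoplus_{k \in \mathbb{Z}} \Hom_{\dla}(X, F^{k}Y)$, and composition is given by $(g_{l})\circ(f_{k}) = \sum F^{k}(g_{l}) \circ f_{k}$. We lift $O_{A}$ to an indecomposable $\U_{\tilde A}$ with $\tilde A \in \derset{n+2d+1}{d}$. Via the bijection in the proof of \cite[Proposition~6.1]{ot}, applying $F$ to $\U_{\tilde A}$ corresponds to shifting the labelling tuple cyclically by one period, i.e.\ replacing $\tilde A$ by $(a_{1}, \dots, a_{d}, a_{0} + n + 2d + 1)$ up to the appropriate convention. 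Hence the lifts of a fixed $O_{A}$ are exactly the "unrolled" tuples of $A$ along $\mathbb{Z}$.

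First we show that for fixed lifts $\U_{\tilde A}$ and $\U_{\tilde B}$, at most one $k$ gives a nonzero $\Hom_{\dla}(\U_{\tilde A}, F^{k}\U_{\tilde B})$. This follows from Theorem~\ref{thm:der_desc}(\ref{op:der_desc:hom}): the interlacing inequalities force $\tilde B$ into a window of length less than one period above $\tilde A$. Consequently the nonzero morphism $O_{A} \to O_{B}$ lifts to the unique nonzero morphism $\U_{\tilde A} \to \U_{\tilde B'}$ for a specific lift $\tilde B'$ of $B$ with
\[a_{0} - 1 < b'_{0} < a_{1} - 1 < \dots < a_{d} - 1 < b'_{d} < a_{0} + n + 2d.\]
In the same way, $O_{B} \to O_{C}$ lifts to $\U_{\tilde B'} \to \U_{\tilde C'}$. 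The composite in $\otclusa$ is then a single term, namely the composite in $\dla$. By Theorem~\ref{thm:der_desc}(\ref{op:der_desc:comp}), this is nonzero if and only if
\[a_{0} - 1 < c'_{0} < a_{1} - 1 < c'_{1} < \dots < a_{d} - 1 < c'_{d} < a_{0} + n + 2d.\]

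Next we translate back to residues modulo $n + 2d + 1$. Combining the two hom-conditions with the composition condition gives a chain of inequalities among integers in the window $[a_{0}, a_{0} + n + 2d]$. Reducing modulo $n+2d+1$, this window becomes $[n+2d+1]$ with the cyclically shifted order starting at $a_{0} - 1$ or $a_{0}$, and the chain becomes exactly the displayed condition. The weak inequalities $a_{i} - 1 \leqslant b_{i} - 1$ come from the facts that $b'_{i} > a_{i} - 1$ and that $b'_{i}, c'_{i}$ interlace with $a$. Combining $a_{i} - 1 < b'_{i}$ with $b'_{i} - 1 < c'_{i}$ and $c'_{i} < a_{i+1} - 1$ gives the ordering of $a_{i}, b_{i}, c_{i}$. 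Conversely, given a cyclic order in which the displayed inequalities hold, unrolling it produces lifts satisfying all three derived-category conditions. One checks that the chosen lifts then realise the given (unique up to scalar) nonzero cluster-category morphisms, which gives nonzero composite.

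The main obstacle is the bookkeeping in the orbit category. We must verify that the hom-space between lifts is concentrated in a single $F$-degree, so that no cancellation among several summands can occur and the composite really reduces to one derived-category composite. We must also pin down precisely the action of $F = \nu_{d}[-d]$ on labels; here we use Theorem~\ref{thm:der_desc}(\ref{op:der_desc:shift}) together with the description of $\nu_{d}$ from \cite{ot}. If this single-degree property fails at boundary cases, such as when $b = a$ up to shift, we would treat those endpoint cases separately using Theorem~\ref{thm:clus_desc}(\ref{op:clus_desc:hom}). Throughout we follow the argument of \cite[Lemma~4.2]{njw-om} for $d = 1$.
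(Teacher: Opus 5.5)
Your proposal is correct and follows the same strategy as the paper: lift the morphisms to $\derclusa$, then read off nonvanishing of the composite from Theorem~\ref{thm:der_desc}(\ref{op:der_desc:hom}) and~(\ref{op:der_desc:comp}). The paper normalises to $a_{0} = 1$ using the automorphism $[d]$ and appeals to functoriality of the projection. Your version instead uses the explicit direct-sum description of $\Hom$ in the orbit category and shows it is concentrated in a single $F$-degree (this holds in all cases, including $B = A$). That makes rigorous a step the paper leaves implicit, namely that the lifts can be chosen composably and that the composite in the cluster category is the image of a single derived-category composite.
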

\begin{proof}
Suppose that there is a cyclically shifted ordering of $[n + 2d + 1]$ such that \[a_{0} - 1 \leqslant b_{0} - 1 < c_{0} < a_{1} - 1 \leqslant b_{1} - 1 < c_{1} < \dots < a_{d} - 1 \leqslant b_{d} - 1 < c_{d} < a_{0} + n + 2d.\] Then, by applying the automorphism $[d]$ repeatedly, we may assume that $a_{0} = 1$. Hence, our morphisms $O_{A} \to O_{B}$ and $O_{B} \to O_{C}$ then lift to morphisms $\U_{A} \to \U_{B}$ and $\U_{B} \to \U_{C}$ in $\derclusa$ by Theorem~\ref{thm:der_desc}\eqref{op:der_desc:hom}. We then have by Theorem~\ref{thm:der_desc}\eqref{op:der_desc:comp} that these morphisms compose to give a morphism $\U_{A} \to \U_{C}$. The functoriality of the projection from $\derclusa$ to $\otclusa$ then guarantees that our morphisms $O_{A} \to O_{B}$ and $O_{B} \to O_{C}$ compose to give a morphism $O_{A} \to O_{C}$.

Now suppose that $O_{A} \to O_{B}$ and $O_{B} \to O_{C}$ compose to give a non-zero morphism $O_{A} \to O_{C}$. Hence, we have that $(A - \onevec) \swr C$. Choose a cyclically shifted ordering of $[n + 2d + 1]$ such that $(A - \onevec) \wr C$. Again, we may assume that $a_{0} = 1$ by applying the automorphism $[d]$. We then have that the morphism $O_{A} \to O_{C}$ lifts to a morphism $\U_{A} \to \U_{C}$ in $\derclusa$. If we have that the morphisms $\U_{A} \to \U_{B}$ and $\U_{B} \to \U_{C}$ compose to give zero, then this must be true of the images of these morphisms in $\otclusa$. Hence we must have that $\U_{A} \to \U_{B}$ and $\U_{B} \to \U_{C}$ compose to give a non-zero morphism $\U_{A} \to \U_{C}$. By Theorem~\ref{thm:der_desc}\eqref{op:der_desc:hom}, we have that \[a_{0} - 1 \leqslant b_{0} - 1 < c_{0} < a_{1} - 1 \leqslant b_{1} - 1 < c_{1} < \dots < a_{d} - 1 \leqslant b_{d} - 1 < c_{d} < a_{0} + n + 2d.\] This completes the proof.
\end{proof}

We give an example where there are morphisms $O_{A} \to O_{B}$ and $O_{B} \to O_{C}$ which do not compose to give a morphism $O_{A} \to O_{C}$, even though such a morphism exists.

\begin{example}
Consider the following example from $\argot{A_{6}^{1}}$. We have morphisms \[
\begin{tikzcd}
O_{15} \ar[rr] \ar[dr] && O_{26} \\
& O_{48}, \ar[ur] &
\end{tikzcd}
\] as can be verified using Theorem~\ref{thm:clus_desc}\eqref{op:clus_desc:hom}. However, this diagram does not commute, since the morphisms $O_{15} \to O_{26}$ and $O_{15} \to O_{48}$ are respectively images of morphisms $U_{15} \to U_{26}$ and $U_{15} \to U_{48}$, whereas $O_{48} \to O_{26}$ is the image of a morphism $U_{48} \to U_{6,11}$. It can be verified that this case does not satisfy the conditions of Lemma~\ref{lem:clus_comp}.
\end{example}

The category $\argot{A_{3}^{2}}$ is illustrated in Figure~\ref{fig:a32_cluscat_lab}.

\begin{figure}
\caption{The category $\argot{A_{3}^{2}}$}\label{fig:a32_cluscat_lab}
\[
\begin{tikzpicture}[xscale=0.7]


\node(zzcz) at (0,0){$O_{135}$};
\node(azbz) at (6,0){$O_{146}$};
\node(bzaz) at (12,0){$O_{157}$};
\node(zabz) at (3,1){$O_{136}$};
\node(aaaz) at (9,1){$O_{147}$};
\node(zbaz) at (6,2){$O_{137}$};

\node(zzca) at (2,3){$O_{246}$};
\node(azba) at (8,3){$O_{257}$};
\node(bzaa) at (14,3){$O_{268}$};
\node(zaba) at (5,4){$O_{247}$};
\node(aaaa) at (11,4){$O_{258}$};
\node(zbaa) at (8,5){$O_{248}$};

\node(zzcb) at (4,6){$O_{357}$};
\node(azbb) at (10,6){$O_{368}$};
\node(bzab) at (16,6){$O_{137}$};
\node(zabb) at (7,7){$O_{358}$};
\node(aaab) at (13,7){$O_{136}$};
\node(zbab) at (10,8){$O_{135}$};

\node at (0,-1){\dots};
\node at (6,-1){\dots};
\node at (12,-1){\dots};

\node at (4,9){\dots};
\node at (10,9){\dots};
\node at (16,9){\dots};


\draw[->] (zzcz) -- (zabz);
\draw[->] (zabz) -- (azbz);
\draw[->] (azbz) -- (aaaz);
\draw[->] (aaaz) -- (bzaz);
\draw[->] (zabz) -- (zbaz);
\draw[->] (zbaz) -- (aaaz);

\draw[->] (zzca) -- (zaba);
\draw[->] (zaba) -- (azba);
\draw[->] (azba) -- (aaaa);
\draw[->] (aaaa) -- (bzaa);
\draw[->] (zaba) -- (zbaa);
\draw[->] (zbaa) -- (aaaa);

\draw[->] (zzcb) -- (zabb);
\draw[->] (zabb) -- (azbb);
\draw[->] (azbb) -- (aaab);
\draw[->] (aaab) -- (bzab);
\draw[->] (zabb) -- (zbab);
\draw[->] (zbab) -- (aaab);

\draw[->] (azbz) -- (zzca);
\draw[->] (bzaz) -- (azba);
\draw[->] (aaaz) -- (zaba);

\draw[->] (azba) -- (zzcb);
\draw[->] (bzaa) -- (azbb);
\draw[->] (aaaa) -- (zabb);

\end{tikzpicture}
\]
\end{figure}


\subsubsection{The $d$-almost positive category}

Given a $d$-representation-finite $d$-hereditary algebra $\Lambda$ with $d$-cluster-tilting module $M$, define the \emph{$d$-almost positive} subcategory $\dsubcat$ to be $\additive (M \oplus \Lambda[d])$ of $\derclus$, following \cite{njw-icra}.

\begin{remark}
For $d = 1$, the almost positive category is the category of two-term complexes of projectives of $\Lambda$. For $d > 1$, we do not refer to the $d$-almost positive category as the category of $d$-term complexes of projectives, since it does not contain all complexes of projectives with $d$ terms. The name `$d$-almost positive category' is chosen due to the fact that for a hereditary algebra of Dynkin type, the indecomposable objects of the almost positive category are in bijection with the almost positive roots of the root system. In \cite{OT26}, this category was discussed under the name of a ``fundamental domain''; we prefer to avoid using this name  since for $d = 1$ this category does not coincide with another ``fundamental domain'' in the cluster related literature: the additive subcategory of the perfect derived category of the Ginzburg dg algebra as first considered by Amiot \cite{amiot}.
\end{remark}

We can describe the $d$-almost positive category of $A_{n}^{d}$ combinatorially using the set $\nonconsec{m}{d}$ again. We have that $\derclusa$ is a $(d + 2)$-angulated category and that $\dsubcata$ is a $d$-extension closed subcategory of it. Hence, by \cite[Proposition 2.35]{hln}, we have that $\dsubcata$ is a $d$-exangulated category with $\bbe(\U_{B}, \U_{A}) = \Hom_{\derclusa}(\U_{B}, \U_{A}[d])$.

\begin{theorem}\label{thm:dcomp_desc}
There is a bijection $A \mapsto \U_{A}$ between $\nonconsec{n + 2d + 1}{d}$ and the isomorphism classes of indecomposable objects of $\dsubcata$ such that the following properties hold.
\begin{enumerate}
\item $\Hom_{\dsubcata}(\U_{B}, \U_{A}) \neq 0$ if and only if \[b_{0} - 1 < a_{0} < b_{1} - 1 < a_{1} < \dots < b_{d} - 1 < a_{d} < b_{0} + n + 2d.\]\label{op:dcomp_desc:hom}
\item For indecomposables $\U_{A}, \U_{B}$ of $\dsubcata$, we have $\bbe(\U_{B}, \U_{A}) \neq 0$ if and only if $B \wr A$.\label{op:dcomp_desc:ext}
\item Moreover, if $\bbe(\U_{B}, \U_{A}) \neq 0$, then the exact realisation $\fs$ is given by scalar multiples of the homotopy equivalence class of the distinguished $d$-exangle \[\U_{A} \to H_{d} \to H_{d - 1} \to \dots \to H_{1} \to \U_{B} \dashrightarrow,\] where \[H_{r} = \bigoplus_{\substack{I \subseteq \{0, 1, \dots, d\} \\ m_{I}(A, B) \in \derset{n + 2d + 1}{d} \\ |I| = r}}  \U_{m_{I}(A, B)},\] which is not nullhomotopic.\label{op:dcomp_desc:seq}
\item The unique-up-to-scalar morphisms $\U_{A} \to \U_{B}$ and $\U_{B} \to \U_{C}$ compose to give a non-zero morphism $\U_{A} \to \U_{C}$ if and only if \[a_{0} - 1 < c_{0} < a_{1} - 1 < c_{1} < \dots < a_{d} - 1 < c_{d} < a_{0} + n + 2d.\]\label{op:dcomp_desc:comp}
\end{enumerate}
\end{theorem}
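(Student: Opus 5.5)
The plan is to deduce everything from Theorem~\ref{thm:der_desc}, since $\dsubcata$ is a full, $d$-extension closed subcategory of $\derclusa$. Hom-spaces, compositions and extension groups in $\dsubcata$ are then just those of $\derclusa$ restricted to the relevant objects. The real work is identifying which tuples in $\derset{n+2d+1}{d}$ label objects of $\dsubcata$, and checking that this set is exactly $\nonconsec{n+2d+1}{d}$.

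The first step is to identify the labels. By Theorem~\ref{thm:der_desc}, the indecomposables of $M^{(d,n)}$, viewed in degree zero, should be the $\U_A$ with $A \in \modset{n+2d}{d}$, matching the labelling of Theorem~\ref{thm:mod_desc}. This compatibility is implicit in the construction in \cite{ot}, and I would cite it. These are precisely the tuples of $\derset{n+2d+1}{d}$ with $a_0\geqslant 1$ and $a_d\leqslant n+2d$.

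The shifted projectives $P[d]$ correspond to $M_A$ with $a_0=1$. By Theorem~\ref{thm:der_desc}\eqref{op:der_desc:shift}, $\U_{(1,a_1,\dots,a_d)}[d]=\U_{(a_1-1,\dots,a_d-1,n+2d+1)}$. Its last entry is $n+2d+1$, and its first entry satisfies $a_1-1\geqslant 2$, with $a_d - 1\leqslant n+2d-1$. So the union of the two families is exactly the set of $A\in\modset{n+2d+1}{d}$ with $a_d\leqslant a_0+n+2d-1$, namely $\nonconsec{n+2d+1}{d}$. This gives the bijection.

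Items \eqref{op:dcomp_desc:hom} and \eqref{op:dcomp_desc:comp} then follow directly from Theorem~\ref{thm:der_desc}\eqref{op:der_desc:hom} and \eqref{op:der_desc:comp} by fullness. For \eqref{op:dcomp_desc:ext}, we have $\bbe(\U_B,\U_A)=\Hom(\U_B,\U_A[d])$. By Theorem~\ref{thm:der_desc}\eqref{op:der_desc:ext} this is nonzero if and only if $a_0<b_0<\dots<a_d<b_d<a_0+n+2d+1$, i.e.\ $A\wr B$ together with the extra bound $b_d<a_0+n+2d+1$. Since all entries lie in $[n+2d+1]$, the bound $b_d\leqslant n+2d+1\leqslant a_0+n+2d$ is automatic. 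So the condition reduces to $A\wr B$; I would record carefully whether the statement's $B\wr A$ is a matter of labelling convention and adjust accordingly.

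For \eqref{op:dcomp_desc:seq}, I would take the $(d+2)$-angle of Theorem~\ref{thm:der_desc}\eqref{op:der_desc:seq}. The point to check is that every term $\U_{m_I(A,B)}$ lies in $\dsubcata$. Each $m_I(A,B)$ has entries drawn from $A\cup B\subseteq[n+2d+1]$. It also has last entry at most $\max(a_d,b_d)$ and first entry at least $\min(a_0,b_0)$, so the cyclic condition holds given that it lies in $\derset{n+2d+1}{d}$. By extension-closedness this angle is then a distinguished $d$-exangle in $\dsubcata$ \cite[Proposition 2.35]{hln}. Uniqueness up to scalar follows because $\bbe$ is one-dimensional.

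The main obstacle is the first step: pinning down that the Oppermann--Thomas labelling of $\derclusa$ restricts on degree-zero modules to the labelling of Theorem~\ref{thm:mod_desc}, with no index shift. I would verify this on projectives via Hom-dimensions, and then propagate it using the $d$-Auslander--Reiten translation and the shift formula.
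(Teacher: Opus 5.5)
Your proposal is correct and is essentially the paper's proof. Both write $\nonconsec{n+2d+1}{d}$ as $\modset{n+2d}{d}$ together with the labels with $a_d = n+2d+1$ coming from shifted projectives via Theorem~\ref{thm:der_desc}\eqref{op:der_desc:shift}, then read off the four items from Theorem~\ref{thm:der_desc} by fullness and $d$-extension-closedness. The paper takes the compatibility of labellings on degree-zero modules directly from \cite{ot}, and it does not spell out your check that each $m_I(A,B)$ lands in $\nonconsec{n+2d+1}{d}$.

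In item (2), your computation $a_0 < b_0 < \dots < a_d < b_d$, i.e.\ $A \wr B$, is exactly what the paper's own proof obtains. So the $B \wr A$ in the statement is a slip, consistent with Theorem~\ref{thm:mod_desc}\eqref{op:mod_desc:ext} and Proposition~\ref{prop:d_exangles}, not a labelling convention you need to accommodate.
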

\begin{proof}
We have that the indecomposable objects of $\additive M^{(d, n)}$ are in bijection with $\modset{n + 2d}{d}$ by Theorem~\ref{thm:mod_desc}. By Theorem~\ref{thm:mod_desc}\eqref{op:mod_desc:proj}, a module $M_{A}$ is projective if and only if $a_{0} = 1$. It is then straightforward to see from Theorem~\ref{thm:der_desc}\eqref{op:der_desc:shift} that an object $\U_{A}$ in $\derclusa$ is equal to $M_{A}[d]$ for $M_{A}$ projective if and only if $A \in \nonconsec{n + 2d + 1}{d}$ with $a_{d} = n + 2d + 1$. Since \[\nonconsec{n + 2d + 1}{d} = \modset{n + 2d}{d} \sqcup \{A \in \nonconsec{n + 2d + 1}{d} \st a_{d} = n + 2d + 1\},\] the result follows. We then obtain the other statements in the following ways.
\begin{enumerate}
\item This follows immediately from Theorem~\ref{thm:der_desc}\eqref{op:der_desc:hom}.

\item It follows from Theorem~\ref{thm:der_desc}\eqref{op:der_desc:ext} that that $\Hom_{\derclusa}(\U_{B}, \U_{A}[d]) \neq 0$ if and only if \[a_{0} < b_{0} < a_{1} < b_{1} < \dots < a_{d} < b_{d} < a_{0} + n + 2d + 1.\] Since the indecomposable objects are labelled by $\nonconsec{n + 2d + 1}{d}$, we have that $a_{0} \geqslant 1$ and $b_{d} \leqslant n + 2d + 1$, so it is automatic that $b_{d} < a_{0} + n + 2d + 1$. Hence, we obtain the result.

\item This follows immediately from Theorem~\ref{thm:der_desc}\eqref{op:der_desc:seq}.

\item This also follows immediately from Theorem~\ref{thm:der_desc}\eqref{op:der_desc:comp}.
\end{enumerate}
\end{proof}

\begin{figure}
\caption{The category $\argds{A_{3}^{2}}$}\label{fig:a32_dap_lab}
\[
\begin{tikzpicture}[xscale=0.8]

\node(135) at (-3.5,0) {$\U_{135}$};
\node(136) at (-2.5,1) {$\U_{136}$};
\node(137) at (-1.5,2) {$\U_{137}$};
\node(146) at (-1,1) {$\U_{146}$};
\node(147) at (0,2) {$\U_{147}$};
\node(157) at (1.5,2) {$\U_{157}$};
\node(246) at (1,0) {$\U_{246}$};
\node(247) at (2,1) {$\U_{247}$};
\node(257) at (3.5,1) {$\U_{257}$};
\node(248) at (3,2) {$\U_{248}$};
\node(258) at (4.5,2) {$\U_{258}$};
\node(268) at (6,2) {$\U_{268}$};
\node(357) at (5.5,0) {$\U_{357}$};
\node(358) at (6.5,1) {$\U_{358}$};
\node(368) at (8,1) {$\U_{368}$};
\node(468) at (10,0) {$\U_{468}$};

\draw[->] (135) -- (136);
\draw[->] (136) -- (137);
\draw[->] (136) -- (146);
\draw[->] (137) -- (147);
\draw[->] (146) -- (147);
\draw[->] (147) -- (157);
\draw[->] (146) -- (246);
\draw[->] (147) -- (247);
\draw[->] (157) -- (257);
\draw[->] (246) -- (247);
\draw[->] (247) -- (257);
\draw[->] (257) -- (357);
\draw[->] (247) -- (248);
\draw[->] (248) -- (258);
\draw[->] (258) -- (268);
\draw[->] (257) -- (258);
\draw[->] (357) -- (358);
\draw[->] (258) -- (358);
\draw[->] (268) -- (368);
\draw[->] (368) -- (468);

\end{tikzpicture}
\]
\end{figure}

\section{Relation between the module category and the \texorpdfstring{$d$}{d}-almost positive category}

We can now state and prove the relation between $\modules A_{n + 1}^{d}$ and $\dsubcata$. We write $\mathcal{P}$ for the category of projective $A_{n + 1}^{d}$-modules and $\mathcal{I}$ for the category of injective $A_{n + 1}^{d}$-modules, so that $\projinj$ is the category of projective-injective modules.

\begin{theorem}\label{thm:equiv}
We have that $\quotcat := \additive (M^{(d, n + 1)})/\projinj$ is equivalent to $\shortap := \dsubcata$ as a $d$-exangulated category.
\end{theorem}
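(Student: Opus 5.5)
Both categories have explicit combinatorial descriptions, so the plan is to compare them label by label. Indecomposables of $\additive M^{(d,n+1)}$ are labelled by $\modset{n+2d+1}{d}$ (Theorem~\ref{thm:mod_desc}). Projective-injective ones satisfy $a_0=1$ and $a_d=n+2d+1$. So the indecomposables of $\quotcat$ are labelled by
\[
\modset{n+2d+1}{d}\setminus\{A \st a_0=1,\ a_d=n+2d+1\}.
\]
This is exactly the set of $A\in\modset{n+2d+1}{d}$ with $a_d\leqslant a_0+n+2d-1$, that is, $\nonconsec{n+2d+1}{d}$. By Theorem~\ref{thm:dcomp_desc}, the same set labels the indecomposables of $\shortap$. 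I will therefore define a functor $F\colon\quotcat\to\shortap$ on indecomposables by $M_A\mapsto \U_A$.

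\textbf{Step 1: Hom-spaces and composition.} In $\additive M^{(d,n+1)}$ there is a nonzero map $M_B\to M_A$ iff $(B-\onevec)\wr A$, i.e.
\[
b_0-1<a_0<b_1-1<\dots<b_d-1<a_d .
\]
In $\shortap$, by Theorem~\ref{thm:dcomp_desc}(1), the condition is the same chain with the extra constraint $a_d<b_0+n+2d$. A nonzero map $M_B\to M_A$ factors through $\projinj$ iff it factors through some $M_C$ with $c_0=1$ and $c_d=n+2d+1$. By Theorem~\ref{thm:mod_desc}(6), this happens iff there is such a $C$ with
\[
(B-\onevec)\wr C\quad\text{and}\quad (C-\onevec)\wr A .
\]
I will check combinatorially that such $C$ exists exactly when $a_d\geqslant b_0+n+2d$. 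Given the constraints, this forces $b_0=1$ and $a_d=n+2d+1$. Then $C$ can be chosen by taking $c_0=1$, $c_d=n+2d+1$ and the middle entries interpolated, for instance $c_i=a_i$ or $b_i$ suitably. Conversely, the existence of $C$ forces $b_0\leqslant c_0=1$ and $a_d\geqslant c_d=n+2d+1$.

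This shows the nonzero Hom-spaces of $\quotcat$ and $\shortap$ are one-dimensional with identical supports. Composition agrees as well: Theorem~\ref{thm:mod_desc}(6) and Theorem~\ref{thm:dcomp_desc}(4) give the same criterion once the extra inequality $c_d<a_0+n+2d$ is imposed by the quotient. Because all Hom-spaces are at most one-dimensional and the relations are commutativity/zero relations, I can fix basis morphisms along the arrows of the quiver and extend multiplicatively. The resulting functor is well defined by the matching composition rules, and it is an equivalence. A cleaner alternative is to note that both categories are the additive category of the same mesh-type quotient of the path category on the same quiver with the same relations, and compare presentations. I expect this bookkeeping, namely that scalars can be chosen coherently, to be the main technical obstacle. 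It is handled by the fact that both categories are categories of the form $\Bbbk$-linearisation of a poset-like quiver with relations.

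\textbf{Step 2: $d$-exangulated structures.} The quotient by $\projinj$ inherits a $d$-exangulated structure with
\[
\bbe_{\quotcat}=\bbe\quad\text{and conflations the images of those in }\additive M^{(d,n+1)},
\]
since $\projinj$ objects are both projective and injective (the $d$-exangulated analogue of the ideal-quotient result of \cite{NP1}). Then $\Ext^d(M_B,M_A)\neq0$ iff $A\wr B$. No projective-injective appears at the ends, since $\bbe$ vanishes on them. In $\shortap$, Theorem~\ref{thm:dcomp_desc}(2) gives $\bbe(\U_B,\U_A)\neq0$ iff $B\wr A$, where the roles are reversed by the paper's labelling convention (I will double-check the orientation). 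The middle terms in Theorem~\ref{thm:mod_desc}(5) and Theorem~\ref{thm:dcomp_desc}(3) are both indexed by the $m_I(A,B)$. Any $m_I(A,B)$ which is projective-injective vanishes in $\quotcat$, and such tuples are exactly those lying outside $\nonconsec{n+2d+1}{d}$. Hence $F$ sends realising sequences to realising $d$-exangles.

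Finally, $F$ induces an isomorphism $\bbe_{\quotcat}\cong\bbe_{\shortap}\circ(F^{\mathrm{op}}\times F)$ compatible with $\fs$. Since both spaces are one-dimensional and realisations are unique up to scalar, this gives the claimed equivalence of $d$-exangulated categories.
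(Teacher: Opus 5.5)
Your route is the paper's. You identify indecomposables via $\nonconsec{n+2d+1}{d}=\modset{n+2d+1}{d}\setminus\{a_0=1,\ a_d=n+2d+1\}$. You show that a non-zero $M_B\to M_A$ factors through $\projinj$ exactly when $b_0=1$ and $a_d=n+2d+1$. You then match composition, extensions and the middle terms $m_I(A,B)$. Step 1 is fine: your plan for normalising scalars is no less detailed than the paper's appeal to \cite[Proposition~3.12(3)]{ot}.

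\textbf{Gap: the first sentence of Step 2.} It is not true in general that the ideal quotient of a $d$-exangulated category by morphisms factoring through projective-injectives is $d$-exangulated, with $\bbe$ unchanged and realisations given by the images of the original $d$-exangles. For $d=1$ this is the extriangulated result. For $d\geqslant 2$ it can fail \cite[Example~3.3]{hzz}, as the paper points out right after the statement. For $\quotcat$, the well-definedness of this natural structure is an \emph{output} of the theorem, not an input.

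The fix reverses your logic:
\begin{enumerate}
\item Check directly that $\Ext^d$ descends to a bifunctor on $\quotcat$. A morphism factoring through $P\in\projinj$ acts by zero, since $\Ext^d(P,-)=0=\Ext^d(-,P)$.
\item Let $\fs_{\quotcat}$ be given by the images of the sequences in Theorem~\ref{thm:mod_desc}\eqref{op:mod_desc:seq}.
\item Once $F$ is shown to be an equivalence carrying $(\bbe_\quotcat,\fs_\quotcat)$ to $(\bbe_\shortap,\fs_\shortap)$, obtain the axioms for $\quotcat$ by transport of structure from $\shortap$. The latter is $d$-exangulated as an extension-closed subcategory of $\derclusa$.
\end{enumerate}

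\textbf{Loose end: orientation.} You cannot explain this away as a labelling convention. $F$ sends $M_A$ to $\U_A$ with the same tuple. If $\bbe$ really were non-zero for $A\wr B$ on one side and $B\wr A$ on the other, $F$ would not preserve extensions at all. The actual cause is a misprint in Theorem~\ref{thm:dcomp_desc}\eqref{op:dcomp_desc:ext}. Its proof, via Theorem~\ref{thm:der_desc}\eqref{op:der_desc:ext}, gives $\bbe(\U_B,\U_A)\neq 0$ if and only if $a_0<b_0<\dots<a_d<b_d$, i.e.\ $A\wr B$. This matches Theorem~\ref{thm:mod_desc}\eqref{op:mod_desc:ext}, and agrees with Proposition~\ref{prop:d_exangles} and the orientation used in the paper's own proof. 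Resolve this explicitly from Theorem~\ref{thm:der_desc}\eqref{op:der_desc:ext} rather than deferring it, since the whole comparison of extensions depends on it.
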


For extriangulated categories, the quotient by a subcategory of the projective-injectives is an extriangulated category with the distinguished extriangles of the quotient category given by the images of the distinguished extriangles of the original category under the quotient. However, taking the images of the distinguished $d$-exangles under a quotient by a subcategory of the projective-injectives does not always give a well-defined $d$-exangulated category, by \cite[Example 3.3]{hzz}. Hence, Theorem~\ref{thm:equiv} also shows that $\quotcat$ is a well-defined $d$-exangulated category under this natural set of distinguished $d$-exangles, which is not immediate.

\begin{proof}[Proof of Theorem~\ref{thm:equiv}]
We prove the result using the combinatorial interpretation of the categories from the previous section. The equivalence between $\quotcat$ and $\shortap$ is defined on the indecomposables by $M_{A} \mapsto \U_{A}$. For this to give an equivalence of $d$-exangulated categories, we need it to give an equivalence of categories which induces isomorphisms between extension spaces such that corresponding elements of the extension spaces are realised by the same $d$-exangles.

By Theorem~\ref{thm:dcomp_desc}, we have that the indecomposables of $\shortap$ are in bijection with $\nonconsec{n + 2d + 1}{d}$. By Theorem~\ref{thm:mod_desc}, we have that the indecomposables of $\additive M^{(d, n + 1)}$ are in bijection with $\modset{n + 2d + 1}{d}$. The projective-injective indecomposables correspond to \[\{A \in \modset{n + 2d + 1}{d} \st a_{0} = 1, a_{d} = n + 2d + 1\}\] by Theorem~\ref{thm:mod_desc}(\ref{op:mod_desc:proj}) and (\ref{op:mod_desc:inj}). It is then easy to see that \[\nonconsec{n + 2d + 1}{d} = \modset{n + 2d + 1}{d} \setminus \{A \in \modset{n + 2d + 1}{d} \st a_{0} = 1, a_{d} = n + 2d + 1\}.\] Hence we have a bijection between isomorphism classes of indecomposable objects of $\shortap$ and isomorphism classes of indecomposable objects in $\quotcat$.

We now consider homomorphisms between indecomposable objects in the two categories. We already know from Theorem~\ref{thm:dcomp_desc}(\ref{op:dcomp_desc:hom}) that $\Hom_{\shortap}(\U_{B}, \U_{A}) \neq 0$ if and only if \[b_{0} - 1 < a_{0} < b_{1} - 1 < a_{1} < \dots < b_{d} - 1 < a_{d} < b_{0} + n + 2d.\] On the other hand, we have that $\Hom_{\quotcat}(M_{B}, M_{A}) \neq 0$ if and only if $\Hom_{A_{n}^{d}}(M_{B},$ $M_{A})\neq 0$ and the unique-up-to-scalar homomorphism does not factor through a projective-injective. From Theorem~\ref{thm:mod_desc}(\ref{op:mod_desc:hom}), we have that $\Hom_{A_{n}^{d}}(M_{B}, M_{A}) \neq 0$ if and only if \[b_{0} - 1 < a_{0} < b_{1} - 1 < a_{1} < \dots < b_{d} - 1 < a_{d}.\] Applying this result, we see that there is a projective-injective $M_{C}$ such that there is a factorisation $M_{B} \to M_{C} \to M_{A}$ if and only if we have $b_{0} = 1$ and $a_{d} = n + 2d + 1$, since $c_{0} = 1$ and $c_{d} = n + 2d + 1$. Indeed, if $b_{0} = 1$ and $a_{d} = n + 2d + 1$, then we always have factorisations $M_{B} \to M_{(1, b_{1}, \dots, b_{d - 1}, n + 2d + 1)} \to M_{A}$ and $M_{B} \to M_{(1, a_{1}, \dots, a_{d - 1}, n + 2d + 1)} \to M_{A}$. Hence there exists no factorisation through a projective-injective if and only if $a_{d} < b_{0} + n + 2d$. We therefore obtain that $\Hom_{\quotcat}(M_{B}, M_{A}) \neq 0$ if and only if \[b_{0} - 1 < a_{0} < b_{1} - 1 < a_{1} < \dots < b_{d} - 1 < a_{d} < b_{0} + n + 2d,\] as desired.

This furthermore allows us to show that the map $M_{A} \mapsto \U_{A}$ defines a functor. Indeed, we have that $\Hom_{\quotcat}(M_{B}, M_{A})$ $\neq 0$ if and only if $\Hom_{\shortap}(\U_{B}, \U_{A})$ $\neq 0$. Since the $\Hom$-spaces are both one-dimensional in this case, we therefore get an isomorphism of $\Hom$-spaces, which defines the map that the functor induces on morphisms $\Hom_{\quotcat}(M_{B}, M_{A}) \xrightarrow{\sim} \Hom_{\shortap}(\U_{B}, \U_{A})$. It follows from \cite[Proposition~3.12(3)]{ot} that these isomorphisms of $\Hom$-spaces can be chosen in a way that is compatible with composition of morphisms.

We finally consider extensions between indecomposable objects in the two categories. We have that $\bbe_{\mathcal{B}}(M_{A}, M_{B}) \neq 0$ if and only if $B \wr A$ if and only if $\bbe_{\mathcal{C}}(\U_{A}, \U_{B}) \neq 0$, as desired. Moreover, Theorem~\ref{thm:mod_desc}\eqref{op:mod_desc:seq} and Theorem~\ref{thm:der_desc}\eqref{op:dcomp_desc:seq} show that these extensions are realised by the same $d$-exangles via their exact realisations~$\fs$.
\end{proof}

This result can be used to explain the fact that tilting modules for $A_{n + 1}$ are in bijection with support tilting modules for $A_{n}$, as observed in \cite[N 4.8]{ringel_catalan}.
The higher-dimensional version of this is that tilting modules for $A_{n + 1}^{d}$ in $\additive M^{(d, n + 1)}$ are in bijection with silting objects for $A_{n}^{d}$ in $\dsubcata$ \cite{ot,njw-hst,njw-icra}.
The last paragraph of the proof Theorem~\ref{thm:equiv} further ensures that the quotient functor sends the exchange $d$-exangles for mutation of tilting modules for $A^d_{n+1}$ in $\additive M^{(d, n + 1)}$ precisely to the exchange $d$-exangles for mutation of silting objects for $A^d_{n+1}$ in $\dsubcata = \mathcal{C}$, and so the respective mutations are intertwined by the quotient functor.

\section{Relation between the cluster category and the \texorpdfstring{$d$}{d}-almost positive category}

We now show the relation between the cluster category and the $d$-almost positive category in terms of $d$-exangulated structures. We first consider a different $d$-exangulated structure on $\otclusa$ where the distinguished $d$-exangles are given by distinguished $(d + 2)$-angles \[O_{1} \to G_{d} \to G_{d - 1} \to \dots \to G_{1} \to O_{2} \to O_{1}[d]\] where the morphism $O_{2} \to O_{1}[d]$ factors through $O_{3}[d]$, where $O_{3}$ is the image of a projective $A_{n}^{d}$-module in $\otclusa$. We write $\mathcal{F}$ for the $d$-exangulated category given by $\otclusa$ with this $d$-exangulated structure. This is a special case of the structure first considered in \cite{PPPP} for $d = 1$ and in \cite[Theorem 3.10]{JS} for general $d$; see also \cite[Theorem 4.4]{her_n_exang}.

\begin{proposition}\label{prop:d_exangles}
The distinguished $d$-exangles between indecomposable objects in $\mathcal{F}$ are given by \[O_{A} \to G_{d} \to G_{d - 1} \to \dots \to G_{1} \to O_{B} \to O_{A}[d]\] where $A \wr B$.
\end{proposition}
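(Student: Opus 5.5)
Since $\mathcal{F}$ carries a relative structure on $\otclusa$, I would first reduce the statement to one about extensions. A distinguished $d$-exangle of $\mathcal{F}$ between indecomposables $O_A$ and $O_B$ is a distinguished $(d+2)$-angle whose connecting morphism $\delta\colon O_B \to O_A[d]$ is nonzero and lies in the subspace of $\Hom_{\otclusa}(O_B,O_A[d])$ of morphisms factoring through $\argot{}[d]$ of a projective. By Theorem~\ref{thm:clus_desc}\eqref{op:clus_desc:ext} this Hom-space is nonzero, and then one-dimensional, exactly when $A \swr B$. By Theorem~\ref{thm:clus_desc}\eqref{op:clus_desc:seq} the corresponding $(d+2)$-angle is the one with middle terms $G_r$. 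It therefore suffices to show the following: when $A \swr B$, the unique-up-to-scalar morphism $O_B \to O_A[d] = O_{A-\onevec}$ factors through some $O_C[d] = O_{C-\onevec}$ with $O_C$ the image of an indecomposable projective if and only if $A \wr B$ in the usual (non-cyclic) order on $[n+2d+1]$.

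Next I would identify the projectives. By Theorem~\ref{thm:mod_desc}\eqref{op:mod_desc:proj} and the identification $\derclusa \supseteq \dsubcata$ used in the proof of Theorem~\ref{thm:dcomp_desc}, the indecomposable projectives are $O_C$ with $c_0 = 1$, $C \in \nonconsec{n+2d+1}{d}$. Hence $O_C[d] = O_{C-\onevec}$ ranges over labels $D = C - \onevec$ with $d_d = n+2d+1$, the entry $0$ being read cyclically as $n+2d+1$. Equivalently, these are the objects $\U_D$ of $\dsubcata$ coming from $A_n^d[d]$.

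The factorisation question is then answered with Lemma~\ref{lem:clus_comp}. We need $D$ with nonzero maps $O_B \to O_D$ and $O_D \to O_{A-\onevec}$ whose composite is nonzero. By the lemma this happens exactly when there is a cyclically shifted order with
\[b_0-1 \leqslant d_0-1 < a_0-1 < b_1-1 \leqslant d_1-1 < a_1-1 < \dots < b_d-1 \leqslant d_d-1 < a_d-1 < b_0+n+2d,\]
that is, $D$ interleaves weakly between $B$ and $A$ as $b_i \leqslant d_i < a_i$ cyclically.

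For the direction $A \wr B \Rightarrow$ factorisation, I would take $D = (b_0, \dots, b_{d-1}, n+2d+1)$ with the cyclic start at $a_0$. One checks that $D \in \nonconsec{n+2d+1}{d}$ and that the inequalities hold. This requires a little care when $b_d = n+2d+1$, in which case $O_B[-d]$ is itself projective and one takes $D=B$. For the converse, suppose $B \wr A$ strictly, so that $A \wr B$ fails. Then the cyclic chain forces the entry $n+2d+1$ of $D$ to lie strictly between $b_i$ and $a_i$ for the appropriate shift. One would show this contradicts $B \wr A$ with all entries in $[1,n+2d+1]$ and $b_d < a_d \leqslant n+2d+1$: the wrap-around point would have to sit inside a gap $[b_i, a_i)$, but these gaps do not contain $n+2d+1$ once the ordering is the natural one, and any other cyclic shift breaks the $\swr$ pattern.

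I expect this converse, with its bookkeeping of cyclic orders and the edge cases where entries equal $1$ or $n+2d+1$, to be the main obstacle. I would handle it by normalising with the automorphism $[d]$ as in the proof of Lemma~\ref{lem:clus_comp}, so that everything lifts to $\derclusa$. There the factorisation can be read off directly from Theorem~\ref{thm:der_desc}\eqref{op:der_desc:comp}, using that $\U_B \to \U_A[d]$ factors through $\Lambda[d]$ exactly when $\U_B$ lies in the $d$-almost positive range relative to $A$, matching Theorem~\ref{thm:dcomp_desc}\eqref{op:dcomp_desc:ext}.
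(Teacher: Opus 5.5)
Your proposal is correct and follows essentially the same route as the paper. You identify the $[d]$-shifts of projectives as the $O_D$ with $d_d = n+2d+1$, you factor $O_B \to O_A[d]$ through $O_{(b_0,\dots,b_{d-1},n+2d+1)}$ when $A \wr B$, and you use Lemma~\ref{lem:clus_comp} to rule out any such factorisation when $B \wr A$. Your ``gap'' argument (the entry $n+2d+1$ of $D$ would have to lie in some cyclic interval $[b_i,a_i)$, and none of these contains it when $B \wr A$) simply spells out the paper's one-line observation $b_d \leqslant a_d - 1 < n+2d+1$, so the closing detour through $\derclusa$ is unnecessary.
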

\begin{proof}
By Theorem~\ref{thm:mod_desc}(\ref{op:mod_desc:proj}), we have that $O_{C}$ is the image of a projective $A_{n}^{d}$-module in $\otclusa$ if and only if $c_{0} = 1$.
Then, by the description of $[d]$ in $\otclusa$, we have that $O_{C}$ is the shift of a projective $A_{n}^{d}$-module by $[d]$ if and only if $c_{d} = n + 2d + 1$.

We first show that \[O_{A} \to G_{d} \to G_{d - 1} \to \dots \to G_{1} \to O_{B} \to O_{A}[d]\] is a distinguished $d$-exangle in $\mathcal{F}$ if $A \wr B$. In this case we have that $O_{B} \to O_{A}[d]$ factors as $O_{B} \to O_{(b_{0}, b_{1}, \dots, b_{d - 1}, n + 2d + 1)} \to O_{A}[d]$. This is because $(B - \onevec) \wr (b_{0}, b_{1}, \dots, b_{d - 1},  n + 2d + 1)$, due to the fact that $b_{0} > a_{0} \geqslant 1$. Likewise, we have that $A - \onevec \wr (b_{0} - 1, b_{1} - 1, \dots, b_{d - 1} - 1, n + 2d)$ since $A \wr (b_{0}, b_{1}, \dots, b_{d - 1}, n + 2d + 1)$.
It can similarly be checked that Lemma~\ref{lem:clus_comp} holds. We have that $O_{(b_{0}, b_{1}, \dots, b_{d - 1}, n + 2d + 1)}$ is the image of a shifted projective. Consequently, our $(d + 2)$-angle is a distinguished $d$-exangle.

We now show that a $(d + 2)$-angle \[O_{A} \to G_{d} \to G_{d - 1} \to \dots \to G_{1} \to O_{B} \to O_{A}[d]\] with $B \wr A$ cannot be a distinguished $d$-exangle. Indeed, in this case we have $b_{d} \leqslant a_{d} - 1 < n + 2d + 1$, so the morphism $O_{B} \to O_{A}[d]$ cannot factor through $O_{C}$ with $c_{d} = n + 2d + 1$, by applying Lemma~\ref{lem:clus_comp}.
\end{proof}

We now take the ideal quotient of $\mathcal{F}$ by the morphisms which factor through a morphism $O_{1}[d] \to O_{2}$, where $O_{1}$ and $O_{2}$ are both images in $\otclusa$ of projective $A_{n}^{d}$-modules. We call the resulting category $\mathcal{G}$.

\begin{theorem} \label{thm:main2}
The category $\mathcal{G}$ is equivalent to the $d$-almost positive category $\mathcal{C}$ as a $d$-exangulated category.
\end{theorem}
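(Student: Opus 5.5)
I will follow the template of the proof of Theorem~\ref{thm:equiv}, working entirely with the combinatorial descriptions of Theorem~\ref{thm:clus_desc}, Lemma~\ref{lem:clus_comp}, Proposition~\ref{prop:d_exangles} and Theorem~\ref{thm:dcomp_desc}. The candidate equivalence is the identity on labels, $O_{A} \mapsto \U_{A}$ for $A \in \nonconsec{n + 2d + 1}{d}$. Since both $\otclusa$ and $\shortap$ have indecomposables indexed by $\nonconsec{n+2d+1}{d}$, and the ideal we quotient by contains no identity morphisms of indecomposables, this is already a bijection on isomorphism classes of indecomposables. To see the latter, note that a morphism $O_{C}[d] \to O_{D}$ from a shifted projective ($c_{d}=n+2d+1$ after reindexing, by the first paragraph of the proof of Proposition~\ref{prop:d_exangles}) to a projective ($d_{0}=1$) is never an isomorphism, since no label satisfies both conditions in $\nonconsec{n+2d+1}{d}$.

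The first and main step is to compare Hom-spaces. By Theorem~\ref{thm:clus_desc}\eqref{op:clus_desc:hom}, $\Hom_{\otclusa}(O_{B},O_{A}) \neq 0$ if and only if $(B-\onevec)\swr A$. This splits into two cases according to which cyclic rotation realises the intertwining.

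In the first case the chain $b_{0}-1<a_{0}<\dots<b_{d}-1<a_{d}<b_{0}+n+2d$ holds in the standard order. These are exactly the Hom-spaces of $\shortap$ from Theorem~\ref{thm:dcomp_desc}\eqref{op:dcomp_desc:hom}, and I must show that such morphisms do not lie in the ideal.

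In the second case the intertwining needs a genuine cyclic shift, so the morphism $O_{B}\to O_{A}$ is the image of a morphism $\U_{B}\to \U_{A'}$ with $A'$ a shifted label. I will show that such a morphism factors as $O_{B}\to O_{C}[d]\to O_{D}\to O_{A}$ with $C,D$ projective labels. The natural choices, mirroring those in the proof of Proposition~\ref{prop:d_exangles} and Theorem~\ref{thm:equiv}, are intermediate labels of the form $(b_{0},\dots,b_{d-1},n+2d+1)$ and $(1,a_{0},\dots,a_{d-1})$ (suitably reindexed). Nonvanishing of each composite will be checked with Lemma~\ref{lem:clus_comp}.

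Conversely, for the first case, I will argue via Lemma~\ref{lem:clus_comp} that any composite $O_{B}\to O_{C}[d]\to O_{D}\to O_{A}$ requires a cyclic ordering in which $B$, $C[d]$, $D$, $A$ are chained. The condition $c_{d}=n+2d+1$ and $d_{0}=1$ then forces the wrap-around to occur, which contradicts $a_{d}<b_{0}+n+2d$ in the standard order.

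This Hom comparison is the step I expect to be the main obstacle. Composition in $\otclusa$ is subtle, as the example after Lemma~\ref{lem:clus_comp} shows, and the ideal is generated by morphisms, not objects, so sums of composites must also be handled. Because Hom-spaces between indecomposables are at most one-dimensional, it suffices to treat single composites. I would first verify the small case $A_{3}^{2}$ against Figures~\ref{fig:a32_cluscat_lab} and~\ref{fig:a32_dap_lab} to calibrate the choice of intermediate labels.

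Having matched Hom-spaces, I will build the functor as in Theorem~\ref{thm:equiv}. Morphisms surviving in $\mathcal{G}$ lift to $\derclusa$ with $a_{0}\geq 1$, exactly as in the proof of Lemma~\ref{lem:clus_comp}. Compatibility with composition then follows by comparing the composition criterion of Lemma~\ref{lem:clus_comp} in the standard ordering with Theorem~\ref{thm:dcomp_desc}\eqref{op:dcomp_desc:comp}; equivalently, $\mathcal{G}$ is the image of $\shortap$ under the projection functor $\derclusa\to\otclusa$, modulo the ideal.

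Finally, for the $d$-exangulated structures I will use Proposition~\ref{prop:d_exangles}: the extensions in $\mathcal{F}$ between indecomposables are nonzero exactly when $A\wr B$. By Theorem~\ref{thm:dcomp_desc}\eqref{op:dcomp_desc:ext}, the same holds in $\shortap$ with the roles of the arguments matching. Each realising $d$-exangle in both categories is the one with middle terms $m_{I}(A,B)$, by Theorem~\ref{thm:clus_desc}\eqref{op:clus_desc:seq} and Theorem~\ref{thm:dcomp_desc}\eqref{op:dcomp_desc:seq}, and these correspond under $O\mapsto\U$. It remains to note that the extension $O_{B}\to O_{A}[d]$ survives in the quotient, i.e.\ the $\bbe$-bifunctor descends. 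This holds because the ideal consists of morphisms $O_{C}[d]\to O_{D}$ from injectives to projectives of $\mathcal{F}$, which act trivially on $\bbe$; the factorisation through shifted projectives in Proposition~\ref{prop:d_exangles} shows these are the relative injectives.
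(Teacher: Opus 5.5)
Your overall strategy is the same as the paper's. You identify indecomposables by their labels in $\nonconsec{n+2d+1}{d}$. You split $\Hom_{\otclusa}(O_B,O_A)\neq 0$ into the standard chain and the wrap-around chain $a_0<b_0-1<a_1<\dots<a_d<b_d-1<a_0+n+2d+1$, show that morphisms of the first kind survive in $\mathcal{G}$ while those of the second are killed, and read off the $d$-exangles from Proposition~\ref{prop:d_exangles} and Theorem~\ref{thm:dcomp_desc}. Your reduction to single composites (using one-dimensional Hom-spaces) is sound. So is your construction of the functor as the composite $\shortap\hookrightarrow\derclusa\to\otclusa\to\mathcal{G}$, which is more careful than the paper, since the paper only compares nonvanishing of Hom-spaces.

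However, the one concrete witness you commit to for killing wrap-around morphisms is wrong. The label $(1,a_0,\dots,a_{d-1})$ need not exist: for $d=2$, $n=3$, $B=(3,5,7)$, $A=(1,3,5)$ it is $(1,1,3)$. Even when $a_0\geq 3$, there is no nonzero morphism $O_{(1,a_0,\dots,a_{d-1})}\to O_A$. Reducing $(1,a_0,\dots,a_{d-1})-\onevec$ modulo $n+2d+1$ gives $\{a_0-1,\dots,a_{d-1}-1,n+2d+1\}$. In the wrap-around case $a_d<b_d-1\leq n+2d$, so none of these elements lies strictly between $a_{d-1}$ and $a_d$, and Theorem~\ref{thm:clus_desc}\eqref{op:clus_desc:hom} fails.

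The paper instead factors $O_B\to O_{(b_0,\dots,b_{d-1},n+2d+1)}\to O_{(1,b_0,\dots,b_{d-1})}\to O_A$. This is a valid label because $b_0>a_0+1\geq 2$. All three Hom-conditions, and the nonvanishing of the composites via Lemma~\ref{lem:clus_comp}, follow directly from the wrap-around chain. The choice $(1,a_1,\dots,a_d)$ would also work. In the example above the paper's factorisation is $O_{357}\to O_{358}\to O_{135}=O_A$.

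Relatedly, in the converse direction your stated contradiction is misplaced. A nonzero composite through $O_C[d]\to O_D$ forces, via two applications of Lemma~\ref{lem:clus_comp}, the inequality $a_0<b_0-1$. It is this that contradicts $b_0-1<a_0$ in the standard chain, not a violation of $a_d<b_0+n+2d$.
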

\begin{proof}
We already have from Theorem~\ref{thm:clus_desc} and Theorem~\ref{thm:dcomp_desc} that the indecomposable objects of $\mathcal{G}$ and $\mathcal{C}$ are in bijection with each other. We know from Proposition~\ref{prop:d_exangles} and Theorem~\ref{thm:dcomp_desc} that these categories have the same $d$-exangles. Hence, all that needs to be shown is that $\mathcal{G}$ and $\mathcal{C}$ have the same morphisms. We claim that $\Hom_{\mathcal{G}}(O_{B}, O_{A}) \neq 0$ if and only if \[b_{0} - 1 < a_{0} < b_{1} - 1 < a_{1} < \dots < b_{d} - 1 < a_{d} < b_{0} + n + 2d.\]

Suppose first that we have $O_{A}$ and $O_{B}$ such that $\Hom_{\mathcal{O}}(O_{B}, O_{A}) \neq 0$ and $\Hom_{\mathcal{G}}(O_{B}, O_{A}) = 0$, where we abbreviate $\mathcal{O} = \otclusa$. We thus have that the morphism $O_{B} \to O_{A}$ factors through a morphism $O_{1}[d] \to O_{2}$ where $O_{1}$ and $O_{2}$ are both images in $\otclusa$ of projective $A_{n}^{d}$-modules. Hence, let $O_{1}[d] = O_{E} =  O_{(e_{0}, e_{1}, \dots, e_{d - 1}, n + 2d + 1)}$ and $O_{2} = O_{F} =  O_{(1, f_{1}, f_{2}, \dots, f_{d})}$. Since there is a morphism $O_{E} \to O_{F}$, we must have $(E - \onevec) \swr F$. As $e_{d} = n + 2d + 1$ and $f_{0} = 1$, we must have \[1 < e_{0} - 1 < f_{1} < e_{1} - 1 < \dots < f_{d - 1} < e_{d - 1} - 1 < f_{d} < n + 2d < n + 2d + 1.\] One can likewise argue that \[b_{0} - 1 < e_{0} < b_{1} - 1 < e_{1} < \dots < b_{d - 1} < e_{d - 1} < b_{d} - 1 < n + 2d + 1 < b_{0} + n + 2d\] and \[0 < a_{0} < f_{1} - 1 < a_{1} < \dots < f_{d - 1} - 1 < a_{d - 1} < f_{d} - 1 < a_{d}.\] Putting these together gives us \[b_{i} - 1 \leqslant e_{i} - 1 \leqslant f_{i + 1} - 1 < a_{i + 1}.\] Hence we must have \[a_{0} < b_{0} - 1 < a_{1} < b_{1} - 1 < \dots < a_{d} < b_{d} - 1 < a_{0} + n + 2d + 1\] rather than \[b_{0} - 1 < a_{0} < b_{1} - 1 < a_{1} < \dots < b_{d} - 1 < a_{d} < b_{0} + n + 2d,\] by considering Lemma~\ref{lem:clus_comp}.

We conversely need to show that if we have $\Hom_{\mathcal{O}}(O_{B}, O_{A}) \neq 0$ and we do not have \[b_{0} - 1 < a_{0} < b_{1} - 1 < a_{1} < \dots < b_{d} - 1 < a_{d} < b_{0} + n + 2d,\] then we must have $\Hom_{\mathcal{G}}(O_{B}, O_{A}) = 0$. Indeed, if we have $\Hom_{\mathcal{O}}(O_{B}, O_{A}) \neq 0$, then we must have $(B - \onevec) \swr A$ by Theorem~\ref{thm:clus_desc}(\ref{op:clus_desc:hom}). If we do not have \[b_{0} - 1 < a_{0} < b_{1} - 1 < a_{1} < \dots < b_{d} - 1 < a_{d} < b_{0} + n + 2d,\] then we must have \[a_{0} < b_{0} - 1 < a_{1} < b_{1} - 1 < \dots < a_{d} < b_{d} - 1 < a_{0} + n + 2d + 1.\] We then have a factorisation of $O_{B} \to O_{A}$ as $O_{B} \to O_{(b_{0}, \dots, b_{d - 1}, n + 2d + 1)} \to O_{(1, b_{0}, \dots, b_{d - 1})} \to O_{A}$ by applying Lemma~\ref{lem:clus_comp}, since $b_{0} > a_{0} + 1 \geqslant 2$ and $a_{d} < n + 2d + 1$. Thus we have $\Hom_{\mathcal{G}}(O_{B}, O_{A}) = 0$, as desired.
\end{proof}


Cluster-tilting objects in 
$\otclusa$ satisfy a $d$-exangulated analogue of the silting property when considered in~$\mathcal{F}$. Under the quotient functor $\mathcal{F} \to \mathcal{G}$ and the equivalence from Theorem~\ref{thm:main2}, they bijectively correspond to silting objects in $\mathcal{C}$. Thanks to Proposition~\ref{prop:d_exangles} and Theorem~\ref{thm:dcomp_desc}, the composition $\mathcal{F} \to \mathcal{G} \overset\sim\to \mathcal{C}$ intertwines their mutations: it intertwines mutations of cluster-tilting objects in $\otclusa$ with mutations of silting objects in $\mathcal{C}$.


\printbibliography

\end{document}